\documentclass{article}
\usepackage{graphicx} 
\usepackage{amsmath}
\usepackage{lineno}
\usepackage{url}
\usepackage{hyperref}
\usepackage{amssymb}
\usepackage{amsthm}
\usepackage{amsfonts} 
\usepackage{bm}
\usepackage{cases}
\usepackage{xcolor}
\usepackage{color,soul}
\usepackage{nicefrac}
\usepackage{amsmath}
\newtheorem{theorem}{Theorem}
\newtheorem{proposition}{Proposition}
\newtheorem{definition}[proposition]{Definition}

\newtheorem{lemma}[proposition]{Lemma}

\newtheorem{corollary}[proposition]{Corollary}
\newtheorem{example}[proposition]{Example}

\def\bN{\mathbb N}
\def\Down{Down}
\def\eps{\varepsilon}
\def\e{\eps}
\def\cC{\mathcal C}
\def\cR{\mathcal R}
\def\cG{\mathcal G}
\def\cO{\mathcal O}
\def\cS{\mathcal S}
\def\F{F}
\def\ato{\succcurlyeq}
\def\oto{\succcurlyeq_\cC}
\def\otog{\succcurlyeq_{\cC_\cG}}
\def\tto{\succcurlyeq_\cS}
\def\ttog{\succcurlyeq_{\cS_\cG}}
\newcommand{\BF}{\bf\boldmath }
\newcommand{\beq}{\begin{linenomath}\begin{equation*}} 
\newcommand{\eeq}{\end{equation*}\end{linenomath}} 
\newcommand{\beqn}{\begin{linenomath}\begin{equation}} 
\newcommand{\eeqn}{\end{equation}\end{linenomath}} 
\usepackage[normalem]{ulem}
\newtheorem{thmX}{Theorem}

\title{Shadow chains and Conley chains\\ for continuous-time semiflows}
\author{Roberto De Leo, Jim Yorke}
\date{\today}

\begin{document}

\maketitle

\begin{abstract}
    In a recent series of articles we introduced the concept of ``stream of a semiflow''.
    A stream is a closed and transitive binary relation which extends the relation ``being on the orbit of'' and allows to encode the qualitative behavior of a semiflow into a direct graph.
    The most important stream of a semiflow is its chain stream, based on Charles Conley's chains.
    
    In those previous works we omitted several details and proofs on continuous-time semiflows.
    In the present work we complement those articles as follows:
    (i) we provide a full proof of the closedness and transitivity of the chain stream for continuous-time semiflows; 
    (ii) we introduce the concept of ``shadow chain'' for a continuous-time semiflow, based on the Anosov-Sinai-Bowen idea of pseudo-orbit.
    Shadow chains have the advantage that fit naturally with semiflows arising from differential equations.
    Our main result is that, although the shadow chain stream and the Conley chain stream are in general distinct as binary relations, they yield the same chain-recurrent set, the same nodes, and the same chain graph whenever the semiflow has strong compact dynamics. 
    
    While doing this, we also introduce an equivalent definition of recurrent point of a stream in terms of forward-orbit equivalence, which simplifies several arguments below, and we strengthen the definition of s-uniform continuity of a semiflow, fixing a gap in the proof of some important results when the space is not locally compact.
\end{abstract}

\section{Introduction}
In~\cite{Con72}, Charles Conley introduced the notion of $(\varepsilon, T)$-chain of a continuous-time semiflow $F$ on a metric space $(X, d)$. Given two points $x, y \in X$, an $(\varepsilon, T)$-chain from $x$ to $y$ is a finite sequence $x = c_0, c_1, \ldots, c_n = y$ of points of $X$ together with times $t_0, \ldots, t_{n-1} \geq T$ such that $d(c_{k+1}, F^{t_k}(c_k)) < \varepsilon$ for all $k$. 

In a recent series of articles~\cite{DY25,DLY26,DLY26b,ADY25} we have used this notion in the development of a theory of \emph{streams} of dynamical systems. 
A \emph{stream} of $F$ is a closed and transitive binary relation on $X$ that extends its orbit relation; its equivalence classes are called \emph{nodes}, and the induced directed graph provides a compact encoding of the qualitative behavior of $F$. 
The principal example is the \emph{chain stream}, defined by declaring $y$ downstream from $x$ whenever either $y$ lies in the orbit of $x$ or, for every $\varepsilon > 0$ and $T > 0$, there is an $(\varepsilon, T)$-chain from $x$ to $y$.
A point $x$ is \emph{chain-recurrent} if either it is fixed or there is a $y\neq x$ such that $x$ is downstream from $y$ and $y$ is downstream from $x$ (in the present article we will rather use a definition that we introduce in Definition~\ref{def: nodes and graph} and that we prove to be equivalent to the original in Lemma~\ref{lemma: OC1}); the chain-recurrent set together with the chain graph records, in essence, all of the recurrent and gradient-type structure of $F$ (see~\cite{DY25}).

The pivotal role of the chain stream is due to the following fact.
Assume for simplicity that $X$ is locally compact and say that $F$ has {\em compact dynamics} when $F$ has a {\em global attractor}, namely a compact set $\cG\subset X$ that attracts every compact set of $X$ under $F$ (see Definition~\ref{def: compact dynamics} for the precise statement).
This means that the chain stream is canonically associated to this kind of semiflows and so its nodes and graph are fully determined by $F$.
We proved in~\cite[Theorem~5.42]{DLY26} that, if $F$ has compact dynamics and its smallest stream has only countably many nodes (which happens in most semiflows from applications), then the chain stream of $F$ is the smallest stream of $F$.
In~\cite[Theorem~11]{DLY26b} we extended this result to the case when $X$ is not locally compact under the hypothesis that $F$ has some kind of uniform continuity close to the global attractor.
In that case, we say that $F$ has {\em strong compact dynamics}.
While writing this article we noticed that the definition given in~\cite{DLY26b} is actually not strong enough to prove the full claim; in Definition~\ref{def: strong compact dynamics} we provide a strengthened version that fixes this gap.

For discrete-time semiflows, our theory uses the $\varepsilon$-chain notion introduced by Bowen~\cite{Bow75} based on ideas by Anosov~\cite{Ano67} and Sinai~\cite{Sin72}: an $\varepsilon$-chain of a map $f$ is a finite sequence $x = c_0, \ldots, c_n = y$ with $d(c_{k+1}, f(c_k)) < \varepsilon$ for all $k$. The discrete chain stream is then defined by quantifying only over $\varepsilon$. Comparison of the two formulations reveals a structural asymmetry: the continuous-time definition involves both a spatial parameter $\varepsilon$ and a temporal parameter $T$, while the discrete-time definition involves only $\varepsilon$. 

In the present article we introduce a new type of chains for continuous-time semiflows, which we call {\em shadow chains}, that is a direct analogue of the definition used for discrete-time semiflows and so it is better suited, for instance, in proofs of properties that involve chains of both discrete-time and continuous-time semiflows. 
A shadow $\varepsilon$-chain from $x$ to $y$ is a piecewise continuous curve $\gamma : [0, T] \to X$ with $T \geq 1$, $\gamma(0) = x$, $\gamma(T) = y$, satisfying the uniform tracking condition
\beq
  d\bigl(\gamma(t + \tau),\, F^\tau(\gamma(t))\bigr) < \varepsilon
  \quad \text{for all } \tau \in [0, 1] \text{ and } t \in [0, T - \tau].
\eeq
This formulation is particularly natural for semiflows arising from differential equations: for an ODE $\dot{z} = g(z)$ on a Banach space, shadow chains can be produced as solutions of the perturbed equation $\dot{z} = g(z) + u(t)$ with a control function $u$ of small integral norm (see Example~\ref{ex: DE}). 
A special case of the shadow chain definition was already used in~\cite{ADY25} without an equivalence proof with the Conley chain; the present article supplies that proof.

Our main result (Theorem~\ref{thm:main}) is that, under the hypothesis of strong compact dynamics, 
a property satisfied by many systems arising in applications, including every semiflow on a compact space and the semiflows generated by dissipative reaction-diffusion PDEs, the shadow chain stream and the Conley chain stream yield the same chain-recurrent set, the same nodes, and the same chain graph. 
Notice that, although these structures coincide, the relations themselves are genuinely distinct: Section~\ref{sec: defs} exhibits a flow without compact dynamics for which there exist $x, y$ with $x \succcurlyeq_C y$ but $x \not\succcurlyeq_S y$. 
Moreover, the strong compact dynamics hypothesis cannot be dropped: a second example, given as the final remark of this article, displays a flow without compact dynamics for which $\cR_\cC \neq \cR_\cS$.

Along the way, the present article also fills three further gaps in the streams series. 

First, as already mentioned above, the notion of \emph{s-uniform continuity} we introduced in~\cite{DLY26b}, needed when the phase space is not locally compact, as for semiflows from dissipative PDEs, turns out to be insufficient for the proofs of certain results in that paper, including ones on which the present article depends. With Definition~\ref{def: strong compact dynamics} we strengthen this notion to the form actually needed, in turn strengthening the definitions of \emph{strong attractor} and \emph{strong compact dynamics} given in~\cite{DLY26b}. 

Second, in Proposition~\ref{prop: C and O U C pari son} we prove that the ``chain stream'' defined in~\cite{DY25,DLY26,DLY26b,ADY25} is really a stream, a fact we implicitly assumed in those works. 
In the same proposition, we accomplish the same for the shadow chain stream.
The argument to prove these properties is not immediate, as closedness for Conley chains requires a non-trivial refinement of chains so that all transition times lie in a compact interval, and closedness for shadow chains requires careful one-jump estimates at boundary times.

Third, while proving Proposition~\ref{prop: C and O U C pari son}, we clarify and discuss in full detail the following key point left implicit in our previous works. 
The binary relation $\cC$ given by ``$x\oto y$ if and only if, for every $\eps>0$ and $T>0$, there is an $(\eps,T)$-chain from $x$ to $y$'' is closed and transitive.
Except in trivial cases, $\cC$ is not a stream because it does not contain the orbit relation $\cO_F$ of the semiflow.
Nevertheless, we show in Proposition~\ref{prop: C and O U C pari son} that the chain stream is equal to $\cO_F\cup\cC$ and that the chain relation $\cC$ and the chain stream have the same nodes and the same graph.
We also show the analogue properties for the shadow chain relation and the shadow stream. 


As a byproduct, we prove for Conley and shadow chains an equivalent forward-orbit characterization of chain-recurrence (Lemma~\ref{lemma: OC1}): $x$ is Conley (resp. shadow) chain-recurrent if and only if the entire forward orbit $\{F^t(x) : t \geq 0\}$ is Conley (resp. shadow) chain-equivalent. 
This formulation is sometimes more convenient in proofs and is used repeatedly in the sequel.

\medskip
The paper is organized as follows. 
Section~\ref{sec: defs} introduces the basic definitions, gives the differential-equations example, proves closedness and transitivity of the chain relations (Proposition~\ref{prop: closed and transitive}), proves the coincidence of the nodes and graph of the chain relations with those of the corresponding chain streams (Proposition~\ref{prop: C and O U C pari son}), exhibits the example distinguishing $\oto$ from $\tto$, and recalls the notion of strong compact dynamics. 
Section~\ref{sec: main results} contains four preparatory lemmas, Theorem~\ref{thm: R_S subset G} (which reduces the shadow chain-recurrent structure to the global attractor), the proofs of the two inclusions $\cR_\cC \subset \cR_\cS$ and $\cR_\cS \subset \cR_\cC$, the main result (Theorem~\ref{thm:main}) and the example showing that the strong compact dynamics hypothesis is necessary.
%
\section{Shadow and Conley chains}
\label{sec: defs}
Throughout the article we denote by $(X,d)$  a metric space and by $F$
a \emph{semiflow} on $X$, namely $F$ is a continuous map $[0,\infty)\times X\to X$ such that $\F^0=\mathrm{id}_X$ and $\F^{t+s}=\F^t\circ \F^s$ for all $t,s\geq0$.

\medskip
In~\cite{Con72}, Conley introduced the following concept as a simplified version of the concept of prolongation used in the 1960s by Joseph Auslander to define generalized recurrence~\cite{Aus64}.
%
\begin{definition}[Conley, 1972~\cite{Con72}]
    For $\e>0$ and $T>0$, an {\BF$(\e,T)$-chain from $x$ to $y$} is a finite sequence of points $x=x_0,x_1,\dots,x_N=y$ in $X$, $N\geq1$, and times $t_i\ge T$ such that
    \[
    d(\F^{t_i}(x_i),x_{i+1}\big)<\e,\qquad i=0,\dots,N-1.
    \]
    Sometimes we refer to such an $(\e,T)$-chain as a {\bf Conley chain}.
\end{definition}
\begin{definition}
    We write {\BF $x\oto y$} if, for every $\e>0$ and $T>0$, there exists an $(\e,T)$-chain from $x$ to $y$ and
    we set 
    $$
    \text{\BF$\cC$}=\{(x,y):\;x\oto y\}\subset X\times X.
    $$
\end{definition}
Based on the concept of pseudo-orbit introduced in literature by Anosov, Bowen and Sinai, we propose the following alternate definition of chain.
%
\begin{definition}
    Let $F$ be a semiflow on $X$. 
    We say that a curve $\gamma:[0,T]\to X$ is
    {\BF $\eps$-close to the orbits of $F$} (or, for brevity, to $F$) if:
    \begin{enumerate}
        \item $T\geq1$;
        \item $\gamma$ is piecewise continuous;
        \item for every $\tau\in[0,1]$ and $t\in[0,T-\tau]$,
    \beqn
    \label{eq: S}
    d(\gamma(t+\tau),F^{\tau}(\gamma(t)))<\eps. 
    \eeqn
    \end{enumerate}

    Given $\eps>0$, an {\BF $\eps$-chain from $x$ to $y$} is a curve $\gamma:[0,T]\to X$ which is $\eps$-close to $F$ and such that 
    $\gamma(0)=x$ and $\gamma(T)=y$.
    Sometimes we refer to such $\e$-chain as a {\bf shadow chain}.
    When $y=x$, we refer to the $\eps$-chain as an {\BF $\eps$-loop}.
    
    We write {\BF $x\tto y$} if, for every $\e>0$, there exists an $\e$-chain from $x$ to $y$ and
    we set 
    $$
    \text{\BF$\cS$}=\{(x,y):\;x\tto y\}\subset X\times X.
    $$
\end{definition}
%

In this article, we will not consider any other type of chains besides Conley chains and shadow chains. 
Hence, we often simply write ``$(\eps,T)$-chain'' to refer to a Conley chain and ``$\eps$-chain'' to refer to a shadow chain.

\medskip
The example below shows that, in case of semiflows of differential equations, $\eps$-chains can be generated as solutions of the same differential equation plus some ``small'' control.
\begin{example}
    \label{ex: DE}
    Let $(X,\|\cdot\|)$ be a Banach space and let $g:X\to X$ be locally Lipschitz.
    Assume that the
    differential equation
    \beqn
    \label{eq: x'}
    \frac{dz}{dt}=g(z),\quad z\in X,
    \eeqn
    gives rise to a (global) semiflow $F$ and that there is some bounded ball $B$ that is forward-invariant under $F$.
    Finally, assume that $g$ is Lipschitz on $B$ with Lipschitz constant $L$, namely
    \beqn
    \label{eq: Lip}
    \|g(x)-g(y)\|\leq L \|x-y\|\text{ for $x,y\in B$,}
    \eeqn
    and that, for some $T\geq1$ and $\eps>0$, a ``control function'' $u:\mathbb R\to X$ satisfies
    $$
    \int_t^{t+1} \|u(s)\|ds<\eps\cdot e^{-L}\text{ for each $t\in[0, T-1]$}.
    $$
    Let $\gamma:[0,T]\to B$ be a solution of the ``perturbed'' differential equation
    $$
    \frac{dz}{dt}=g(z(t))+u(t)
    $$
    and set $x_0=\gamma(0)$ and $y_0=\gamma(T)$.
    Then, by the Gronwall inequality, $\gamma$ is an $\eps$-chain for the semiflow $F$ from $x_0$ to $y_0$.

If $X$ is a Hilbert space with inner product $\langle\cdot,\cdot\rangle$ then, instead of \eqref{eq: Lip}, we can require that
$$
\langle g(x)-g(y), x-y\rangle\leq L \langle x-y,x-y\rangle\text{ for $x,y\in B$}.
$$
\end{example}
%

The next result is of crucial importance for this article.
\begin{proposition}
    \label{prop: closed and transitive}
    The relations $\cC$ and $\cS$ are closed and transitive.
\end{proposition}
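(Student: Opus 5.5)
Transitivity is the easy half in both cases, so I will do it first. For $\cC$: given $x\oto y$ and $y\oto z$ and $\eps,T>0$, concatenate an $(\eps,T)$-chain from $x$ to $y$ with one from $y$ to $z$. The last point of the first chain is $y$, which is the first point of the second, so the result is an $(\eps,T)$-chain from $x$ to $z$.

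For $\cS$: concatenate an $\eps/2$-chain $\gamma_1:[0,T_1]\to X$ from $x$ to $y$ with an $\eps/2$-chain $\gamma_2:[0,T_2]\to X$ from $y$ to $z$, by setting $\gamma(t)=\gamma_2(t-T_1)$ for $t\ge T_1$. The new curve is piecewise continuous and has length $\ge1$. Inequality \eqref{eq: S} holds trivially when $[t,t+\tau]$ lies inside one piece. If $t<T_1<t+\tau$, I insert $F^{T_1-t}(\gamma(t))$ and $\gamma(T_1)=y$ and use the triangle inequality. This needs continuity of $F^{\tau-(T_1-t)}$ to pass the error at $y$ forward, and that constant is not uniform in general. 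So I would instead cut $\gamma_1$ at the last point: since $\gamma_1(T_1)=y$ is only within $\eps/2$ of $F^{s}(\gamma_1(T_1-s))$, I would rather glue at an interior time. Concretely, I will prove transitivity after closedness: reduce to checking \eqref{eq: S} only for crossing windows and use the tolerance $\eps/2$ together with the semigroup property $F^{\tau}=F^{\tau-s}\circ F^{s}$ and continuity of $F$ on the compact set $[0,1]\times\{y\}$ to choose the second chain's tolerance small enough. Order matters here: first fix $\gamma_1$, then pick $\delta$ by continuity of $F$ at $(\cdot,y)$ so that $d(w,y)<\delta$ implies $d(F^{r}w,F^r y)<\eps/2$ for all $r\in[0,1]$, and then pick $\gamma_2$ as a $\min(\delta,\eps/2)$-chain.

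Closedness of $\cC$: take $x_n\to x$, $y_n\to y$ with $x_n\oto y_n$, and fix $\eps,T$. The issue is that the first time $t_0$ of a chain from $x_n$ may be unbounded, so continuity of $F^{t_0}$ at $x$ is not uniform. The fix is a refinement lemma: any $(\eps',T)$-chain can be refined into an $(\eps',T)$-chain whose times all lie in $[T,2T]$, by splitting a long jump $t\ge 2T$ into pieces through actual orbit points, which have zero error. Then, by compactness of $[T,2T]\times\{x\}$ and continuity of $F$, there is $n$ with $d(F^{t}(x),F^{t}(x_n))<\eps/2$ for all $t\in[T,2T]$ and $d(y_n,y)<\eps/2$. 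I replace the first point $x_n$ by $x$ and the last point $y_n$ by $y$, taking an $(\eps/2,T)$-chain from $x_n$ to $y_n$. If $N=1$ both errors add in a single step, so I use $\eps/4$ throughout.

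Closedness of $\cS$: here the perturbations happen at the endpoints. I modify an $\eps/2$-chain $\gamma_n$ from $x_n$ to $y_n$ by setting $\gamma(0)=x$ and $\gamma(T)=y$; piecewise continuity permits these jumps. The check of \eqref{eq: S} for $t=0$ requires $d(\gamma_n(\tau),F^\tau x)\le d(\gamma_n(\tau),F^\tau x_n)+d(F^\tau x_n,F^\tau x)$, which is uniform over $\tau\in[0,1]$ by compactness. For windows ending at $T$, the bound is $d(y,F^\tau\gamma_n(T-\tau))\le d(y,y_n)+\eps/2$. These one-jump boundary estimates are the main obstacle and need care, since $\tau=0$ with $t=T$ also involves the distance $d(y,y)=0$.
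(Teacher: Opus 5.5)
The parts on transitivity of $\cC$ and closedness of $\cC$ and $\cS$ follow the paper's proof: refine Conley chains so all times lie in $[T,2T]$, use joint continuity on a compact time interval, and replace only the endpoints.

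\textbf{The gap: transitivity of $\cS$.} You put the small tolerance on the wrong chain. In a crossing window $t<T_1<t+\tau$, set $s=T_1-t$, $r=\tau-s\in(0,1)$ and $w=F^{s}(\gamma_1(t))$. Then
\[
d\bigl(\gamma(t+\tau),F^\tau(\gamma(t))\bigr)\le d\bigl(\gamma_2(r),F^r(y)\bigr)+d\bigl(F^r(y),F^r(w)\bigr).
\]
The first term depends on the accuracy of $\gamma_2$. The second needs $d(w,y)<\delta$, and $d(w,y)=d\bigl(\gamma_1(t+s),F^s(\gamma_1(t))\bigr)$ depends on the accuracy of $\gamma_1$. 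If $\gamma_1$ is only an $\varepsilon/2$-chain, no choice of $\gamma_2$ rescues the estimate. For example, take $F^t(u)=e^{10t}u$ on $\mathbb R$ and $p=e^{-10}\varepsilon/4$:
\begin{itemize}
\item let $\gamma_1$ on $[0,1]$ follow the orbit of $p$ and jump to $y=0$ at time $1$; this is an $\varepsilon/2$-chain;
\item let $\gamma_2\equiv 0$, which is an exact orbit;
\item the window $t=1/2$, $\tau=1$ of the concatenation then has error $e^{5}\varepsilon/4>\varepsilon$.
\end{itemize}
The repair is that there is no ordering issue at all. The $\delta$ obtained from continuity of $F$ on $[0,1]\times\{y\}$ depends only on $y$ and $\varepsilon$. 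Since $x\tto y$, you may take $\gamma_1$ to be a $\min\{\delta,\varepsilon/2\}$-chain and $\gamma_2$ an $\varepsilon/2$-chain. This is exactly Lemma~\ref{lemma: Concatenation}, where both pieces are taken $\min\{\varepsilon/3,\eta\}$-accurate.

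\textbf{Smaller bookkeeping point: closedness of $\cS$.} If $\gamma_n$ has length $T=1$, the window $t=0$, $\tau=1$ has both endpoints replaced, so
\[
d\bigl(y,F^1(x)\bigr)\le d(y,y_n)+d\bigl(y_n,F^1(x_n)\bigr)+d\bigl(F^1(x_n),F^1(x)\bigr).
\]
That is three terms, not two. Your bound for windows ending at $T$ tacitly uses $\gamma(T-\tau)=\gamma_n(T-\tau)$, which is false at $t=0$. With a budget of $\varepsilon/2$ per term this exceeds $\varepsilon$. Use $\varepsilon/3$, as the paper does, just as you already switched to $\varepsilon/4$ for the one-leg case of $\cC$.
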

Before getting to the proof of the claim,
we go over a technical lemma.
\begin{lemma}[Concatenation of two shadow chains]
\label{lemma: Concatenation}
Let $\varepsilon>0$ and let $y\in X$. Then there exists $\delta>0$ such that, given any $\delta$-chain from $x$ to $y$ and any $\delta$-chain from $y$ to $z$, their concatenation is an $\eps$-chain from $x$ to $z$.
\end{lemma}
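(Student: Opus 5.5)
The plan is to define the concatenation explicitly and check the tracking inequality~\eqref{eq: S} directly. Let $\gamma_1:[0,T_1]\to X$ be a $\delta$-chain from $x$ to $y$ and $\gamma_2:[0,T_2]\to X$ a $\delta$-chain from $y$ to $z$. Set $\gamma:[0,T_1+T_2]\to X$ by $\gamma(t)=\gamma_1(t)$ for $t\le T_1$ and $\gamma(t)=\gamma_2(t-T_1)$ for $t\ge T_1$. This is well defined because $\gamma_1(T_1)=y=\gamma_2(0)$. It is clearly piecewise continuous, has length $T_1+T_2\ge1$, and satisfies $\gamma(0)=x$ and $\gamma(T_1+T_2)=z$.

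Now fix $\tau\in[0,1]$ and $t\in[0,T_1+T_2-\tau]$. If $[t,t+\tau]$ lies inside $[0,T_1]$ or inside $[T_1,T_1+T_2]$, then~\eqref{eq: S} follows from the corresponding property of $\gamma_1$ or $\gamma_2$, since $\delta\le\eps$. The only real case is the straddling one, $t<T_1<t+\tau$. Write $s=T_1-t$ and $\sigma=t+\tau-T_1$, so that $s,\sigma\in[0,1]$ and $s+\sigma=\tau$. By the semigroup property $F^\tau(\gamma(t))=F^\sigma(F^s(\gamma_1(t)))$. The triangle inequality then gives
\[
d(\gamma(t+\tau),F^\tau(\gamma(t)))\le d(\gamma_2(\sigma),F^\sigma(y))+d\bigl(F^\sigma(y),F^\sigma(F^s(\gamma_1(t)))\bigr).
\]
The first term is $<\delta$ because $\gamma_2$ is a $\delta$-chain starting at $y$. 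For the second term, note that $w:=F^s(\gamma_1(t))$ satisfies $d(w,y)=d(F^s(\gamma_1(t)),\gamma_1(T_1))<\delta$, because $\gamma_1$ is a $\delta$-chain.

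So everything reduces to the following uniform continuity statement, which is the only nontrivial step. There exists $\eta>0$ such that $d(w,y)<\eta$ implies $d(F^\sigma(w),F^\sigma(y))<\eps/2$ for all $\sigma\in[0,1]$. I would prove it by the usual tube-lemma argument, using the joint continuity of $F$ on $[0,\infty)\times X$ and the compactness of $[0,1]$. For each $\sigma_0\in[0,1]$, joint continuity at $(\sigma_0,y)$ gives a neighborhood $(\sigma_0-\rho,\sigma_0+\rho)\times B(y,\eta_{\sigma_0})$ on which $d(F^\sigma(w),F^{\sigma_0}(y))<\eps/4$. Cover $[0,1]$ by finitely many of these intervals and take $\eta$ to be the minimum of the corresponding $\eta_{\sigma_0}$. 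For any $\sigma$ and any $w$ with $d(w,y)<\eta$, pick a covering interval centered at some $\sigma_0$ that contains $\sigma$; comparing both $F^\sigma(w)$ and $F^\sigma(y)$ to $F^{\sigma_0}(y)$ gives the bound $\eps/2$.

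Finally choose $\delta=\min(\eta,\eps/2)$. In the straddling case the two terms are then bounded by $\eps/2$ and $<\eps/2$, so their sum is $<\eps$. In the non-straddling cases the bound $<\delta\le\eps$ already suffices. Note that $\delta$ depends only on $\eps$ and $y$, and not on the chains, as the statement requires. The main obstacle is obtaining the uniformity in $\sigma\in[0,1]$, and the compactness argument above is what handles it.
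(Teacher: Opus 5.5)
Your proof is correct and is essentially the paper's argument. It uses the same concatenation and the same reduction to the straddling case $t<T_1<t+\tau$. It also uses the same triangle inequality through $F^{\sigma}(y)$: the $\delta$-chain estimate of $\gamma_2$ from $y$ bounds one term, and continuity of $F$ near $y$, uniform in $\sigma\in[0,1]$, applied to $F^{T_1-t}(\gamma_1(t))$ bounds the other. The only differences are cosmetic: you split $\varepsilon$ into halves rather than thirds, and you write out the tube-lemma argument that the paper compresses into ``continuity of $F$ over the compact set $[0,1]\times\{y\}$''.
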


\begin{proof}
By continuity of $F$ over the compact set $[0,1]\times\{y\}$, there is an
$\eta>0$ such that
\beqn
\label{eq: unif cont}
        d(u,y)<\eta 
        \quad\text{ implies }\quad
        d(F^\tau(u),F^\tau(y))<\varepsilon/3
        \qquad\text{for all }\tau\in[0,1].
\eeqn

Set $\delta=\min\{\varepsilon/3,\eta\}$, let $\gamma_1:[0,T_1]\to X$ be a $\delta$-chain from $x$ to $y$ and $\gamma_2:[0,T_2]\to X$ be a $\delta$-chain from $y$ to $z$ and let $\gamma:[0,T_1+T_2]\to X$ be the concatenation of $\gamma_1$ and $\gamma_2$.

Let $\tau\in[0,1]$ and $t\in[0,T_1+T_2-\tau]$.
If $T_1\not\in(t,t+\tau)$, the shadow-chain estimate~\eqref{eq: S} for $\gamma$ follows directly from the corresponding estimate for $\gamma_1$ or $\gamma_2$.
Assume now that $T_1\in(t,t+\tau)$, so that $0\in(t-T_1,t-T_1+\tau)\subset(-1,1)$.
Notice first that, since $\gamma_1(T_1)=y$, $\gamma|_{(t,T_1]}=\gamma_1|_{(t,T_1]}$ and $\gamma_1$ is a $\delta$-chain, we have that
\[
        d(y,F^{T_1-t}(\gamma_1(t)))
        =d(\gamma_1(t+(T_1-t)),F^{T_1-t}(\gamma_1(t)))
        <\delta\le\eta
\]
so that, by~\eqref{eq: unif cont},
\[
        d(F^{t-T_1+\tau}(y),F^{\tau}(\gamma_1(t)))
        =
        d(F^{t-T_1+\tau}(y),F^{t-T_1+\tau}(F^{T_1-t}(\gamma_1(t))))<\varepsilon/3.
\]
Moreover, since $\gamma_2$ is a $\delta$-chain and $\gamma(T_1)=\gamma_2(0)=y$,
\[
        d(\gamma_2(t+\tau-T_1),F^{t+\tau-T_1}(y))
        <\delta\le\varepsilon/3.
\]
Therefore, by the triangular inequality,
\[
    d(\gamma(t+\tau),F^\tau(\gamma(t)))
    =
    d(\gamma_2(t+\tau-T_1),F^{\tau}(\gamma_1(t)))
        <2\varepsilon/3<\varepsilon.
\]
Thus, the concatenation is an $\varepsilon$-chain.
\end{proof}

The proofs of the following corollaries are left to the reader.

\begin{corollary}[Finite concatenation]
\label{cor: concatenation}
Let $p_0,\ldots,p_k\in X$ and $\varepsilon>0$. 
There exists a
$\delta>0$ such that, if for each $i=1,\ldots,k$ there is a
$\delta$-chain from $p_{i-1}$ to $p_i$, then the concatenation of all these chains is an $\varepsilon$-chain from $p_0$ to $p_k$.
\end{corollary}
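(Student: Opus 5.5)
The plan is to argue by induction on $k$, using Lemma~\ref{lemma: Concatenation} at each step. The lemma only controls the junction of two chains, so we must check that the estimate for a concatenation of several chains is still local. The key observation is that every chain has length at least $1$, and the window $(t,t+\tau)$ in~\eqref{eq: S} has length $\tau\le 1$. So the window contains at most one junction time in its interior, except in the degenerate case where an intermediate chain has length exactly $1$. That case is handled below.

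Rather than a formal induction, we run the proof of Lemma~\ref{lemma: Concatenation} directly on the $k$-fold concatenation. For each $i=1,\dots,k-1$, continuity of $F$ on the compact set $[0,1]\times\{p_i\}$ gives $\eta_i>0$ such that $d(u,p_i)<\eta_i$ implies $d(F^\tau(u),F^\tau(p_i))<\varepsilon/3$ for all $\tau\in[0,1]$. Set
\[
\delta=\min\{\varepsilon/3,\eta_1,\dots,\eta_{k-1}\}.
\]
Let $\gamma_i:[0,T_i]\to X$ be $\delta$-chains from $p_{i-1}$ to $p_i$, let $\gamma$ be their concatenation, and let $S_i=T_1+\dots+T_i$ be the junction times. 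Fix $\tau\in[0,1]$ and an admissible $t$.

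There are two cases.
\begin{itemize}
\item If no $S_i$ lies in $(t,t+\tau)$, then $[t,t+\tau]$ sits inside a single piece (at the endpoints we use the values assigned by the concatenation). The estimate then follows from that piece being a $\delta$-chain with $\delta<\varepsilon$.
\item If exactly one $S_i$ lies in $(t,t+\tau)$, the three-term triangle inequality from the proof of Lemma~\ref{lemma: Concatenation}, using $\eta_i$ at the point $p_i$, gives a bound $<2\varepsilon/3$.
\end{itemize}
Two junction times $S_{i-1}<S_i$ cannot both lie in the open interval $(t,t+\tau)$: that would force $T_i=S_i-S_{i-1}<\tau\le1$, contradicting $T_i\ge1$. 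So these two cases are exhaustive.

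The main point of care is the bookkeeping at the junctions. Each $S_i$ must be assigned a consistent value in $\gamma$, namely $p_i$, which is both $\gamma_i(T_i)$ and $\gamma_{i+1}(0)$. We also need $t$ or $t+\tau$ to be allowed to equal a junction time without creating a second jump; this is immediate because the curve is continuous through the value $p_i$ there. The condition $T\ge1$ for $\gamma$ is trivial, since $\sum T_i\ge1$.

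An alternative is a literal induction on $k$ applied via Lemma~\ref{lemma: Concatenation}. That approach has a drawback: the intermediate concatenation is only an $\varepsilon'$-chain, not a $\delta$-chain, so the tolerances would have to be nested. The direct argument above avoids this.
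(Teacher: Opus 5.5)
Your proof is correct and is essentially the argument the paper intends: the paper leaves this corollary to the reader, and your direct junction-by-junction application of the one-jump estimate from Lemma~\ref{lemma: Concatenation}, with $\delta$ the minimum over the finitely many junction points, is the natural extension of that lemma's proof (and the same device the paper later invokes for multi-jump curves in Lemmas~\ref{lem:CtoP} and~\ref{lem:shadow-relation-restricts-to-attractor}). The key point is your observation that $T_i\ge1$ forbids two junction times inside an open window of length $\tau\le1$; the ``degenerate case'' you flag in your first paragraph only puts junctions at the window's endpoints, never two in its interior, as your later argument correctly shows.
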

\begin{corollary}[Long loops]
\label{cor: long loops}
If $x\tto  x$, then, for every $\varepsilon>0$ and every $L>0$, there exists an $\varepsilon$-loop through $x$ whose length is larger than $L$.
\end{corollary}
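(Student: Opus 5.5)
The idea is to build the long loop by going around a single short loop many times. Corollary~\ref{cor: concatenation} lets us glue these copies together, because every junction point is the same point $x$. The only care needed is to choose one tolerance that works for every copy at once.

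Fix $\varepsilon>0$ and $L>0$, and choose an integer $k>L$. Apply Corollary~\ref{cor: concatenation} to the points $p_0=p_1=\dots=p_k=x$ with tolerance $\varepsilon$. This gives a $\delta>0$ such that any concatenation of $k$ successive $\delta$-chains from $x$ to $x$ is an $\varepsilon$-chain from $x$ to $x$. Since $x\tto x$, there is a $\delta$-loop $\gamma:[0,T]\to X$ through $x$, and by definition $T\geq1$. Let $\Gamma:[0,kT]\to X$ be $\gamma$ concatenated with itself $k$ times. Then $\Gamma$ is an $\varepsilon$-loop through $x$, and its length is $kT\geq k>L$.

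If one prefers to avoid Corollary~\ref{cor: concatenation}, the same conclusion follows from Lemma~\ref{lemma: Concatenation} alone, applied with $y=x$. The $\delta$ it produces depends only on $\varepsilon$ and $x$, through the continuity of $F$ on $[0,1]\times\{x\}$. The estimate \eqref{eq: S} involves $\tau\le1\le T$, so any window $(t,t+\tau)$ contains at most one junction point $jT$. Hence the one-jump argument in the proof of Lemma~\ref{lemma: Concatenation} applies verbatim at each junction, and windows containing no junction are handled directly by the $\delta$-chain property of $\gamma$.

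The one point to check is that the tolerance does not degrade with the number of copies. It does not, because every junction is at the same point $x$ and each window meets at most one junction thanks to $T\geq1$. No real obstacle is expected.
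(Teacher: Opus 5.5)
Your proof is correct and is essentially the argument the paper intends (it leaves this corollary to the reader, right after Lemma~\ref{lemma: Concatenation} and Corollary~\ref{cor: concatenation}): you fix $k>L$ before choosing $\delta$, take a single $\delta$-loop, and use $T\ge1$ to see that $k$ concatenated copies have length at least $k$, so there is no circularity between $k$, $\delta$ and the loop. Your alternative via Lemma~\ref{lemma: Concatenation} alone is also valid, since every junction is at $x$ and each window $(t,t+\tau)$ with $\tau\le1\le T$ contains at most one junction.
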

%


\begin{proof}[Proof of Proposition~\ref{prop: closed and transitive}]
\par\noindent
{\BF The relation $\cC$.}
Transitivity follows immediately from the fact that each $(\eps,T)$-chain 
from $x$ to $y$ can be concatenated to any other $(\eps,T)$-chain 
from $y$ to $z$ to create a new $(\eps,T)$-chain 
from $x$ to $z$. 

Closedness is a direct consequence of the continuity of $F$.
Consider first $\oto$ and let \(x_n\to x\) and \(y_n\to y\) and assume \(x_n\oto y_n\) for every \(n\). 
We show that \(x\oto y\).
Fix \(\varepsilon>0\) and \(T>0\).
We will use the following elementary observation: any \((\delta,T)\)-chain can
be refined, without changing its endpoints or increasing its error, so that all its times belong to \([T,2T]\). Indeed, if one leg has time \(s\ge T\), choose an integer \(q\ge 1\) such that \(s/q\in [T,2T]\), and replace that leg by \(q\) legs of equal time \(s/q\), inserting the exact orbit points in between. 
All new intermediate errors are zero, and the final error is the old one.

Since \(F\) is jointly continuous and \([T,2T]\) is compact, we have
\[
\sup_{t\in[T,2T]} d(F^t(x_n),F^t(x))\longrightarrow 0 .
\]
Choose \(n\) so large that
\[
d(y_n,y)<\varepsilon/3
\quad\text{and}\quad
\sup_{t\in[T,2T]} d(F^t(x_n),F^t(x))<\varepsilon/3 .
\]
Since \(x_n\oto y_n\), there is an \((\varepsilon/3,T)\)-chain from
\(x_n\) to \(y_n\). 
By the observation above, we may assume its times all lie in \([T,2T]\). 
Write it as
\[
x_n=c_0,c_1,\ldots,c_m=y_n,
\qquad t_i\in[T,2T],
\]
with
\[
d(F^{t_i}(c_i),c_{i+1})<\varepsilon/3 .
\]
Replace only the first and last points by \(x\) and \(y\), obtaining
\[
x,c_1,\ldots,c_{m-1},y .
\]
If \(m=1\), namely the modified chain has the single leg from \(x\) to \(y\), then
\[
d(F^{t_0}(x),y)
\le d(F^{t_0}(x),F^{t_0}(x_n))
+d(F^{t_0}(x_n),y_n)
+d(y_n,y)
<\varepsilon.
\]
Assume now that $m\geq2$.
Then, for the first leg,
\[
d(F^{t_0}(x),c_1)
\le d(F^{t_0}(x),F^{t_0}(x_n))
   +d(F^{t_0}(x_n),c_1)
<2\varepsilon/3<\varepsilon .
\]
The intermediate legs are unchanged. For the last leg,
\[
d(F^{t_{m-1}}(c_{m-1}),y)
\le d(F^{t_{m-1}}(c_{m-1}),y_n)+d(y_n,y)
<2\varepsilon/3<\varepsilon .
\]


Thus, there is an \((\varepsilon,T)\)-chain from \(x\) to \(y\). 
Since \(\varepsilon\) and \(T\) were arbitrary, \(x\oto y\), namely
\(\cC\) is closed.

\medskip\noindent
{\BF The relation \(\cS\).}
Transitivity is an immediate consequence of Lemma~\ref{lemma: Concatenation}.
Now, 
let \(x_n\to x\), \(y_n\to y\),
and assume \(x_n\tto y_n\) for every \(n\). 
Fix \(\varepsilon>0\). 
Since \(F\) is jointly continuous and \([0,1]\) is compact, the convergence
\(x_n\to x\) implies
\[
\sup_{\tau\in[0,1]} d(F^\tau(x_n),F^\tau(x))\to0 .
\]
Choose \(n\) so large that
\[
    d(y_n,y)<\varepsilon/3
    \quad\text{and}\quad
    \sup_{\tau\in[0,1]} d(F^\tau(x_n),F^\tau(x))<\varepsilon/3 .
\]
Since \(x_n\tto y_n\), there exists an \(\varepsilon/3\)-chain \(\gamma_n:[0,T]\to X\) from \(x_n\) to \(y_n\). Define
\[
\gamma(t)=
\begin{cases}
x, & t=0,\\
\gamma_n(t), & 0<t<T,\\
y, & t=T .
\end{cases}
\]
Then \(\gamma(0)=x\) and \(\gamma(T)=y\). We verify that \(\gamma\) is an
\(\varepsilon\)-chain.

If \(0<t<t+\tau<T\), the estimate follows from the fact that \(\gamma_n\) is
an \(\varepsilon/3\)-chain. If \(t=0\) and \(t+\tau<T\), then
\[
d(\gamma(\tau),F^\tau(x))
\le d(\gamma_n(\tau),F^\tau(x_n))
   +d(F^\tau(x_n),F^\tau(x))
<2\varepsilon/3 .
\]
If \(t+\tau=T\), then
\[
d(\gamma(T),F^\tau(\gamma(t)))
\le d(y,y_n)+d(y_n,F^\tau(\gamma_n(t)))
<2\varepsilon/3 ,
\]
with the same estimate also covering the case \(t=0\), \(\tau=T=1\). Hence
\[
d(\gamma(t+\tau),F^\tau(\gamma(t)))<\varepsilon
\]
for every \(\tau\in[0,1]\) and every \(t\in[0,T-\tau]\). Thus \(\gamma\) is an
\(\varepsilon\)-chain from \(x\) to \(y\). 
Since we can do this for every \(\varepsilon>0\), it follows that
\(x\tto y\). 
Hence, \(\cS\) is closed.
\end{proof}




\medskip\noindent
{\bf Nodes and graph of a closed and transitive relation.}
\medskip
The fact that $\cC$ and $\cS$ are not the same relation is not critical to us.
What we do care about is 
whether $\cC$ and $\cS$ share the same qualitative features 
in the sense illustrated below.
%
\begin{definition}
    \label{def: nodes and graph}
    Let $\ato$ be a closed and transitive binary relation on $X$ and $x,y\in X$. 
    Denote by $R\subset X\times X$ the set of all pairs $(x,y)$ such that $x\ato y$. 
    We say that {\BF $x$ is $R$-equivalent to $y$} if $x\ato y$ and $y\ato x$.
    We say that {\BF $x$ is $R$-recurrent} if there is a $t>0$ such that $x$ is $R$-equivalent to $F^t(x)$.
    If all points in $M\subset X$ are mutually $R$-equivalent, we say that {\BF $M$ is $R$-equivalent}.
    We denote the set of all $R$-recurrent points by {\BF $\cR_{R}$}.
    We call {\bf node} of $\cR_R$ each maximal $R$-equivalent set.
    Given two nodes $M,N$, we write {\BF $M\ato N$} if there are $x\in M$ and $y\in N$ such that $x\ato y$.
    We denote by {\BF $\Gamma_R$} the graph whose nodes are the nodes of $\cR_{R}$ and such that there is an edge from node $M$ to node $N$ if $M\ato N$.
\end{definition}
The proposition below grants that the definition of edge given above is well-posed.
%
\begin{proposition}
    Let $R$ be a closed and transitive relation and write $x\ato y$ if $(x,y)\in R$.
    Then:
    \begin{enumerate}
        \item each node of $R$ is closed;
        \item given any two nodes $M,N$ of $R$, if $x\ato y$ for an $x\in M$ and an $y\in N$, then $x'\ato y'$ for every $x'\in M, y'\in N$.
    \end{enumerate}
\end{proposition}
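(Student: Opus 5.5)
The plan is to prove both items directly from the closedness and the transitivity of $R$, working throughout with the definition of node given in Definition~\ref{def: nodes and graph}, namely a maximal $R$-equivalent subset of $\cR_R$.

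\textbf{Item 2.} I would start with this item because it is purely algebraic. Let $x\in M$, $y\in N$ with $x\ato y$, and take any $x'\in M$ and $y'\in N$. Since $M$ is $R$-equivalent, $x'\ato x$; since $N$ is $R$-equivalent, $y\ato y'$. Two applications of transitivity then give $x'\ato x\ato y\ato y'$, hence $x'\ato y'$. This shows that the edge relation on nodes does not depend on the chosen representatives.

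\textbf{Item 1.} Fix a node $M$ and a sequence $x_n\in M$ with $x_n\to x$. The plan has three steps.

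\begin{enumerate}
\item \emph{$x$ is $R$-equivalent to every point of $M$.} Fix $m\in M$. For every $n$ we have $x_n\ato m$ and $m\ato x_n$. Because $R$ is closed in $X\times X$, passing to the limit gives $x\ato m$ and $m\ato x$. It follows that $M\cup\{x\}$ is an $R$-equivalent set, using transitivity for the pairs not involving $x$.

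\item \emph{$x\in\cR_R$.} This is the step I expect to be the main obstacle, since recurrence involves the orbit of $x$ and not just $R$. Fix any $m\in M$. It is recurrent, so there is $t>0$ with $m$ $R$-equivalent to $F^t(m)$. Then $F^t(m)$ is $R$-equivalent to every point of $M$. I expect $F^t(m)$ to lie in $\cR_R$ as well, though this needs a check (see below). Granting it, maximality of $M$ gives $F^t(m)\in M$. Now $x_n\to x$ and continuity of $F^t$ give $F^t(x_n)\to F^t(x)$.
   \begin{itemize}
   \item If each $F^t(x_n)$ is also $R$-equivalent to $x_n$ (equivalently, lies in $M$), closedness of $R$ yields $x\ato F^t(x)$ and $F^t(x)\ato x$, so $x$ is recurrent with the same $t$.
   \item The difficulty is that different $x_n$ may have different return times $t_n$. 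To handle this, I would first try to show that for a single point $m\in\cR_R$ the whole forward orbit of $m$ is $R$-equivalent to $m$. Once the forward orbit of each $x_n$ lies in $M$, one can fix, say, $t=1$ and pass to the limit as above.
   \end{itemize}
   In the paper this forward-orbit statement is the content of Lemma~\ref{lemma: OC1} for the chain relations. For a general closed transitive $R$ it presumably needs $R$ to contain the orbit relation, as streams do. I would therefore prove item~1 under that assumption, or restrict to $\cC$ and $\cS$, where the forward-orbit property is available.

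\item \emph{Conclusion.} With $x\in\cR_R$, step~1 shows that $M\cup\{x\}$ is an $R$-equivalent subset of $\cR_R$. Maximality of $M$ then forces $x\in M$, so $M$ is closed.
\end{enumerate}
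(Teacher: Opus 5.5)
Item 2 is correct and matches the paper's argument. For item 1, however, your proposal does not prove the statement for a general closed transitive $R$: Step 2 is left open and you fall back on extra hypotheses.

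Compare with the paper. It fixes $x\in N$ and shows $N=\{y: x\succcurlyeq y\}\cap\{y: y\succcurlyeq x\}$ using only maximality among $R$-equivalent sets. Then $N$ is closed, as an intersection of the preimages of the closed set $R$ under $y\mapsto(x,y)$ and $y\mapsto(y,x)$. This is your Steps 1 and 3 with Step 2 deleted. It is a complete proof when a node is read as a maximal $R$-equivalent subset of $X$, with no recurrence requirement.

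Under your reading, nodes are maximal $R$-equivalent subsets of $\mathcal R_R$; this is how the phrase ``node of $\mathcal R_R$'' and the later arguments of the paper treat them. With that reading Step 2 cannot be done, because item 1 is false in this generality. The paper's sentence ``by maximality of $N$, it follows that $y\in N$'' silently assumes the very recurrence you could not establish. Here is a counterexample:
\begin{itemize}
\item Take $F^t(x)=x+t$ on $\mathbb R$ and $S=\{(x,y): y\ge x\}\cup\{(x,y): x\le 1,\ y\ge 0\}$. It is closed, transitive and contains the orbit relation, so it is even a stream.
\item One checks that $\mathcal R_S=[0,1)$ and that this set is $S$-equivalent, hence a node.
\item It is not closed: $1$ is $S$-equivalent to every point of $[0,1)$, but $(1+t,1)\notin S$ for all $t>0$, so $1\notin\mathcal R_S$.
\end{itemize}

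The same example refutes your suggested remedy of assuming $R\supset\mathcal O_F$. The forward orbit of $0\in\mathcal R_S$ is not $S$-equivalent, since $(2,0)\notin S$. Even your intermediate expectation $F^t(m)\in\mathcal R_R$ fails: with $m=0$ and $t=1$, $F^1(0)=1\notin\mathcal R_S$.

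Your other remedy, restricting to $\mathcal C$ and $\mathcal S$, does work, and you need not go through Lemma~\ref{lemma: OC1}. By Lemma~\ref{lemma: x >= x}, recurrence for these relations is equivalent to $x\succcurlyeq x$. The limit point inherits this from $x\succcurlyeq m\succcurlyeq x$ in your Step 1.
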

\begin{proof}
  {\bf (1)} Let $N$ be a node of $R$, fix \(x\in N\) and set  
  \[
    M=\{y\in X: x\ato y \text{ and } y\ato x\}.
  \]
  We claim that $M=N$.
  The inclusion \(N\subset M\) is immediate, since all points of \(N\) are mutually \(R\)-equivalent.
  Conversely, let \(y\in M\)
  and \(z\in N\). 
 Since \(z\) and \(x\)
    belong to \(N\), we have
\[
z\ato x
\qquad\text{and}\qquad
x\ato z.
\]
Using transitivity together with \(x\ato y\) and \(y\ato x\), we get
\[
z\ato x,\ x\ato y \quad\Longrightarrow\quad z\ato y,
\]
and
\[
y\ato x,\ x\ato z \quad\Longrightarrow\quad y\ato z.
\]
Thus, \(y\) is mutually \(R\)-equivalent to every point of \(N\). 
By maximality of \(N\), it follows that \(y\in N\) and therefore $M\subset N$, namely
$$
M=N.
$$

Now
\[
\{y\in X:x\ato y\}=\{y\in X:(x,y)\in R\}
\]
is closed, because it is the preimage of the closed set \(R\subset X\times X\)
under the continuous map
\[
X\to X\times X,\qquad y\mapsto (x,y).
\]
Similarly,
\[
\{y\in X:y\ato x\}=\{y\in X:(y,x)\in R\}
\]
is closed. 
Therefore,
\[
N=\{y\in X:x\ato y\}\cap \{y\in X:y\ato x\}
\]
is closed.

{\bf (2)}
Since $x,x'\in M$, we have that $x'\ato x$.
Since $y,y'\in N$, we have that $y\ato y'$.
Hence,
$$
x'\ato x\ato y\ato y'
$$
and so
$$
x'\ato y'.
$$
\end{proof}

\medskip\noindent
{\bf Chain relations and chain streams.}
To our knowledge, the particular role played in the qualitative study of dynamical systems by closed and transitive binary relations that extend the relation ``being on the orbit of'' appeared for the first time in the article by Joseph Auslander~\cite{Aus64} where he introduced generalized recurrence.
In~\cite{DY25}, we started a systematic study of this kind of relations by introducing the concept of stream as follows and studying some of its fundamental properties.
\begin{definition}[De Leo, Yorke, 2025~\cite{DY25}]
    Let $F$ be a semiflow. 
    We denote by 
    $$
    \text{\BF$\cO_F$} = \{(x,y)| x\in X,\,y=F^t(x)\text{ for some }t\geq0\}\subset X\times X
    $$
    the orbit relation of $F$.
    We say that a closed and transitive binary relation $S\subset X\times X$ is an {\BF $F$-stream} (or simply {\bf stream} when there is no ambiguity) if $\cO_F\subset S$.
\end{definition}
%
\medskip\noindent
{\bf The Conley and shadow chain relations defined above are not streams.} Indeed, although they are closed and transitive, neither of them contains necessarily $\cO_F$.

To see this consider, for instance, the semiflow $F$ of the ODE $\dot x=1-x^2$ on $X=[0,1]$.
Then the forward orbit of $0$ under $F$ is the interval $[0,1)$ but $0\oto x$ only for $x=1$ and $0\tto x$ only for $x\in[F^1(0),1]$.

Indeed, in the first case, assume \(x<1\), let
\(
\varepsilon\in(0,(1-x)/2)
\)
and choose \(T>0\) so large that
\[
F^T(0)>1-\varepsilon.
\]
Suppose, by contradiction, that there is an \((\varepsilon,T)\)-chain
from \(0\) to \(x\):
\[
0=x_0,x_1,\ldots,x_N=x,
\qquad t_i\ge T.
\]
Since the semiflow is order-preserving and \(x_i\in[0,1]\), we have
\[
F^{t_i}(x_i)\ge F^T(0)>1-\varepsilon
\]
for every \(i\). 
Hence,
\[
x_{i+1}>1-2\varepsilon
\]
for every \(i\), because
\[
|x_{i+1}-F^{t_i}(x_i)|<\varepsilon.
\]
In particular, for the final point,
\[
x=x_N>1-2\varepsilon,
\]
which contradicts our choice of $\eps$,
so
\[
0\not\succeq_C x
\]
for every \(x<1\).
The reader can verify that, since $F^t(0)\to1$ for $t\to\infty$, $0\oto1$. 

In the second case, let \(\varepsilon>0\) and let
\[
\gamma:[0,T]\to[0,1]
\]
be an \(\varepsilon\)-chain from \(0\) to \(x\). 
Since \(T\ge1\), we may
apply the shadow estimate~\eqref{eq: S} with initial time \(T-1\) and \(\tau=1\). Thus
\[
d\bigl(x,F^1(\gamma(T-1))\bigr)
=
d\bigl(\gamma(1+T-1),F^1(\gamma(T-1))\bigr)
<\varepsilon.
\]
Since \(\gamma(T-1)\in[0,1]\) and the flow is order-preserving, we have that
\[
F^1(\gamma(T-1))\ge F^1(0),
\]
so that
\[
x>F^1(0)-\varepsilon.
\]
Since \(\varepsilon>0\) is arbitrary, we get
\[
x\ge F^1(0).
\]
Finally, the orbit $s\to F^s(0)$ itself, restricted to $[0,t]$, is an $\eps$-chain from $0$ to $F^t(0)$ for every $t\geq1$ for every $\eps>0$ and so $0\tto F^t(0)$ for every $t\geq1$. 
Since $\cS$ is closed, this also shows that $0\tto1$.


\medskip
Although $\cC$ and $\cS$ are not streams, we prove in Proposition~\ref{prop: C and O U C pari son} below that each of them 
gives rise to the same chain-recurrent set, the same nodes and the same graph.
In particular, this justifies and clarifies the definition of chain stream for continuous-time semiflows given in~\cite{DY25,DLY26,DLY26b}.

\medskip
In order to prove Proposition~\ref{prop: C and O U C pari son} we need the following general result and a few preparatory lemmas.
\begin{proposition}
    \label{prop: chain rel vs chain stream}
    Let \(F\) be a continuous-time semiflow on \(X\).
    Let \(R\subset X\times X\) be a closed and transitive relation, and set
    \[
    S_R=R\cup\cO_F.
    \]
    Assume that \(R\) satisfies the following 
    properties:
    \begin{enumerate}
    \item[(OC0)] $S_R$ is closed and transitive (and so is a stream);
    \item[(OC1)] if \(x\in \cR_R\), then the forward orbit of $x$ is an $R$-equivalent set;
    \item[(OC2)] if, for some \(t>0\),
    \(
    (F^t(x),x)\in S_R,  
    \)
    then \(x\in \cR_R\).
\end{enumerate}
    Then \(R\) and \(S_R\) have the same recurrent set, the same nodes, and the same graph.
\end{proposition}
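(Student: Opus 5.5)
We prove the three equalities in order: first $\cR_R=\cR_{S_R}$, then that the nodes coincide, and finally that the edges coincide. Write $x\ato y$ for $(x,y)\in R$ and $x\ato' y$ for $(x,y)\in S_R$. Since $R\subset S_R$, anything $R$-equivalent is $S_R$-equivalent, and this gives all the ``easy'' inclusions. For the reverse inclusions we use the hypotheses (OC1) and (OC2) to upgrade $S_R$-relations between recurrent points to $R$-relations.

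\textbf{Recurrent sets.} If $x\in\cR_R$, there is $t>0$ with $x$ $R$-equivalent to $F^t(x)$, hence $S_R$-equivalent, so $x\in\cR_{S_R}$. Conversely, let $x\in\cR_{S_R}$. Then for some $t>0$ we have $(F^t(x),x)\in S_R$, and (OC2) gives $x\in\cR_R$ directly.

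\textbf{Nodes.} The key claim is the following: if $x,y\in\cR_R$ and $(x,y)\in S_R$, then $x\ato y$. If $(x,y)\in R$ there is nothing to prove. Otherwise $(x,y)\in\cO_F$, i.e.\ $y=F^s(x)$ for some $s\ge0$, and $y$ lies on the forward orbit of $x$. By (OC1) that forward orbit is $R$-equivalent, and it contains $x$, so $x\ato y$ (for $s=0$ as well, since $x$ is $R$-equivalent to itself as a member of that set). Applying the claim in both directions, two recurrent points are $R$-equivalent if and only if they are $S_R$-equivalent. Nodes of either relation lie inside the common recurrent set $\cR_R=\cR_{S_R}$, and they are the maximal equivalence classes there. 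Hence the nodes coincide.

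\textbf{Graph.} An edge $M\to N$ in $\Gamma_{S_R}$ means there are $x\in M$, $y\in N$ with $(x,y)\in S_R$. Both points are recurrent, so the claim gives $x\ato y$, which is an edge in $\Gamma_R$. The converse follows from $R\subset S_R$. Hypothesis (OC0) is used only to guarantee that $S_R$ is closed and transitive, so that its nodes and graph are defined by Definition~\ref{def: nodes and graph}.

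The only subtle step is the claim. One must note that recurrence of $x$ in the sense of $R$ is exactly what triggers (OC1), so $x\in\cR_R$ has to be established first, via the recurrent-set equality. One must also treat the orbit case $(x,y)\in\cO_F\setminus R$ with care, including $s=0$.
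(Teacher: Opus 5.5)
Your proof is correct and follows the paper's argument: the inclusion $\cR_{S_R}\subset\cR_R$ comes straight from (OC2), and (OC1) turns any orbit pair $(x,F^s(x))$ with $x\in\cR_R$ into an $R$-relation, which settles both the nodes and the edges. Isolating this as a single claim applied in both directions makes explicit two cases the paper leaves implicit: the symmetric case $(y,x)\in\cO_F$, and the case $s=0$, where $x\ato x$ follows from $x\ato F^t(x)\ato x$ by transitivity.
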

\begin{proof}
    Since \(R\subset S_R\), we immediately have
    \[
    \cR_R\subset \cR_{S_R}.
    \]
    Conversely, let \(x\in \cR_{S_R}\). 
    Then there is a \(t>0\) such that \(x\) and \(F^t(x)\) are \(S_R\)-equivalent. 
    In particular, $(F^t(x),x)\in S_R$.
    By (OC2), this implies \(x\in \cR_R\). 
    Hence
    \[
    \cR_{S_R}=\cR_R.
    \]

    We now compare the nodes. 
    Let \(x,y\in \cR_R=\cR_{S_R}\). 
    We claim that $x$ is $S_R$-equivalent to $y$ if and only if $x$ is $R$-equivalent to $y$.
    We prove the non-trivial implication.
    Assume that $x$ is $S_R$-equivalent to $y$.
    The only non-trivial case is when $(x,y)\in\cO_F$ but in this case, by (OC1), we know that the orbit of $x$ is $R$-equivalent and so $x$ and $y$ are $R$-equivalent as well.
    Hence, the nodes of $R$ and $S_R$ coincide.

    Finally, we compare the graphs. 
    Since \(R\subset S_R\), every \(R\)-edge is an \(S_R\)-edge. 
    Conversely, suppose there is an \(S_R\)-edge from a node \(M\) to a node \(N\). 
    Then there exist \(x\in M\) and \(y\in N\) such that
\[
(x,y)\in S_R.
\]
If \((x,y)\in R\), this is already an \(R\)-edge. 
If instead \((x,y)\in\cO_F\), then
\(y=F^t(x)\) for some \(t\ge0\). 
Since \(x\in M\subset \cR_R\), (OC1) gives
\[
(x,y)\in R.
\]
Thus every \(S_R\)-edge is already an \(R\)-edge. Hence the two 
graphs coincide.
\end{proof}
We prove below that $\cC$ and $\cS$ satisfy the three properties above.
\begin{lemma}[OC0]
\label{lemma: OC0}
Let $R=\cC,\cS$.
Then
\(
S_R=\cO_F\cup R
\)
is an \(F\)-stream. 
\end{lemma}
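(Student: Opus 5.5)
The plan is to verify directly the two defining properties of a stream for $S_R=\cO_F\cup R$, $R=\cC,\cS$. The inclusion $\cO_F\subset S_R$ is trivial, so only transitivity and closedness need proof. We use throughout that $R$ itself is closed and transitive (Proposition~\ref{prop: closed and transitive}) and that $\cO_F$ is transitive by the semigroup property.

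\textbf{Transitivity.} The cases $R\circ R$ and $\cO_F\circ\cO_F$ are immediate, so only the two mixed cases remain. In each we show that the composite pair lies in $R$.

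Case $(x,F^t(x))\in\cO_F$ followed by $F^t(x)\ato_R z$.
\begin{itemize}
\item For $\cC$: given an $(\e,T)$-chain $F^t(x)=c_0,c_1,\dots,c_N=z$, replace $c_0$ by $x$ and $t_0$ by $t+t_0\ge T$. Then $F^{t+t_0}(x)=F^{t_0}(c_0)$, so nothing changes in the error estimates.
\item For $\cS$: prepend the exact orbit segment $s\mapsto F^s(x)$, $s\in[0,t]$, to an $\e$-chain $\gamma_2$ from $F^t(x)$ to $z$. The total length is still $\ge1$. At a time window straddling the junction we have $F^\tau(F^{t'}(x))=F^{t'+\tau-t}(\gamma_2(0))$, so the estimate reduces to the $\e$-estimate for $\gamma_2$ from time $0$. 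No loss occurs, even when $t<1$.
\end{itemize}

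Case $x\ato_R y$ followed by $(y,F^t(y))\in\cO_F$.
\begin{itemize}
\item For $\cC$: by continuity of $F^t$ at $y$, choose $\delta\le\e$ with $d(u,y)<\delta\Rightarrow d(F^t(u),F^t(y))<\e$. Take a $(\delta,T)$-chain $x=c_0,\dots,c_N=y$, replace its last point by $F^t(y)$ and its last time by $t_{N-1}+t$.
\item For $\cS$: append the orbit segment $s\mapsto F^s(y)$, $s\in[0,t]$, to a $\delta$-chain $\gamma_1$ ending at $y$. Here $\delta=\min\{\e/3,\eta\}$, with $\eta$ from continuity of $F$ on $[0,1]\times\{y\}$, exactly as in Lemma~\ref{lemma: Concatenation}. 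The straddling estimate is the one in that proof, with the $\gamma_2$-error equal to zero.
\end{itemize}

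\textbf{Closedness.} Let $(x_n,y_n)\to(x,y)$ with $(x_n,y_n)\in S_R$. If infinitely many of these pairs lie in $R$, then $(x,y)\in R$ because $R$ is closed. Otherwise, passing to a subsequence, $y_n=F^{t_n}(x_n)$. If $(t_n)$ is bounded, extract $t_n\to t$; joint continuity of $F$ gives $y=F^t(x)$, so $(x,y)\in\cO_F$.

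The main obstacle is the case $t_n\to\infty$, where we must show $(x,y)\in R$ without any uniform control of $F^{t_n}$ near $x$. The idea is to use continuity of $F$ only on a bounded time window at the start and to absorb the rest into an exact orbit piece.
\begin{itemize}
\item For $\cC$: fix $\e,T$ and take $n$ with $t_n\ge2T$, $d(F^T(x),F^T(x_n))<\e$ and $d(y_n,y)<\e$. Then the two-leg chain $x,\;F^T(x_n),\;y$ with times $T$ and $t_n-T\ge T$ is an $(\e,T)$-chain.
\item For $\cS$: take $n$ with $t_n\ge1$, $\sup_{\tau\in[0,1]}d(F^\tau(x_n),F^\tau(x))<\e/3$ and $d(y_n,y)<\e/3$. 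Define $\gamma(0)=x$, $\gamma(s)=F^s(x_n)$ for $0<s<t_n$, and $\gamma(t_n)=y$. This is the endpoint-replacement construction from the proof of Proposition~\ref{prop: closed and transitive}, applied to the exact orbit (a $0$-chain), and it gives an $\e$-chain from $x$ to $y$.
\end{itemize}
Since $\e$ (and $T$) were arbitrary, $(x,y)\in R$, which completes the proof that $S_R$ is closed and hence an $F$-stream.
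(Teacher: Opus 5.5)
Your proof is correct and follows essentially the same route as the paper. The mixed transitivity cases are handled by shifting the first or last transition time (Conley) or by prepending or appending an exact orbit segment with the one-jump estimate (shadow), and closedness reduces to limits of orbit pairs, using the two-leg chain $x,\,F^T(x_n),\,y$ for $\cC$ when $t_n\to\infty$. The differences are cosmetic: for $\cS$ you treat unbounded $t_n$ by the endpoint-replacement construction rather than the paper's splitting $\cO_F=\cO_F^{[0,1]}\cup\cO_F^{[1,\infty)}$ with $\cO_F^{[1,\infty)}\subset\cS$ (equivalent, since $(x_n,F^{t_n}(x_n))\in\cS$ once $t_n\ge1$), and you observe that prepending an exact orbit segment costs no error at all.
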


\begin{proof}
We need to prove that $S_R$ is closed and transitive.

Since $R$ is closed, in order to show that $S_R$ is closed it is enough to show that $$\overline{\cO_F}\cup R=\cO_F\cup R.$$
In case $R=\cS$, this is due to the fact that, for every $x\in X$,  $(x,F^t(x))\in\cS$ for $t\geq1$.
Now, set
$$\cO_F^{[0,1]}=\{(x,F^t(x))|x\in X,t\in[0,1]\},\quad
\cO_F^{[1,\infty)}=\{(x,F^t(x))|x\in X, t\geq1\}.$$
Then $\cO_F^{[0,1]}$ is closed and
$$
\cO^{[1,\infty)}_F\subset\cS
$$
so that, since $\cS$ is closed,
$$
\overline{\cO_F}\cup\cS=\overline{\cO_F^{[0,1]}}\cup\overline{\cO_F^{[1,\infty)}}\cup\cS=\cO_F^{[0,1]}\cup\cS=\cO_F\cup\cS.
$$

Consider now the case $R=\cC$.
Since \(\cC\) is closed, it is enough to consider limits of pairs in \(\cO_F\). 
Let
\[
x_n\to x,\qquad F^{t_n}(x_n)\to y.
\]
If \((t_n)\) is bounded, then, after passing to a subsequence, \(t_n\to t\ge0\).
By continuity of \(F\),
\[
y=\lim_{n\to\infty}F^{t_n}(x_n)=F^t(x),
\]
and so \((x,y)\in \cO_F\).

Assume instead that \(t_n\to+\infty\) and 
fix an \(\varepsilon>0\) and a \(T>0\). 
Choose \(n\) so large that
\[
t_n\ge 2T,
\qquad
d(F^T(x_n),F^T(x))<\varepsilon,
\qquad
d(F^{t_n}(x_n),y)<\varepsilon.
\]
Then
\[
x,\ F^T(x_n),\ y
\]
with transition times
\[
T,\qquad t_n-T
\]
is an \((\varepsilon,T)\)-chain from \(x\) to \(y\). Indeed,
\[
d(F^T(x),F^T(x_n))<\varepsilon
\]
and
\[
d\bigl(F^{t_n-T}(F^T(x_n)),y\bigr)
=
d(F^{t_n}(x_n),y)<\varepsilon.
\]
Since \(\varepsilon>0\) and \(T>0\) were arbitrary, \(x\oto y\), so that 
\((x,y)\in \cC\).
Therefore, every limit point of \(\cO_F\) lies in \(O_F\cup \cC\) and so \(\cO_F\cup \cC\) is closed.

\medskip
It remains to prove transitivity. 
Since \(R\) is transitive and \(\cO_F\) is transitive, only the mixed cases need to be considered.

First assume that
\[
(x,y)\in R
\qquad\text{and}\qquad
(y,z)\in\cO_F.
\]
Then \(z=F^t(y)\) for some \(t\ge0\). 
For \(R=\cC\), the implication
\[
x\oto y \Longrightarrow x\oto F^t(y)
\]
is obtained by adding \(t\) to the final transition time of a sufficiently accurate Conley chain from \(x\) to \(y\). 
Hence \(x\oto z\). 
For \(R=\cS\), append the exact orbit segment from \(y\) to \(F^t(y)\).
The resulting curve is admissible because the original shadow chain piece has length at least \(1\). 
The crossing estimate is the same one-jump estimate used in Lemma~\ref{lemma: Concatenation}.
Hence, \(x\tto z\).

Now, assume that
\[
(x,y)\in\cO_F
\qquad\text{and}\qquad
(y,z)\in R.
\]
Then \(y=F^t(x)\) for some \(t\ge0\). 
For \(R=\cC\), the implication
\[
F^t(x)\oto z \Longrightarrow x\oto z
\]
is obtained by adding \(t\) to the first transition time of a Conley chain from
\(F^t(x)\) to \(z\). 
For \(R=\cS\), prepend the exact orbit segment from \(x\) to \(F^t(x)\) to a sufficiently accurate shadow chain from \(F^t(x)\) to
\(z\).
The resulting curve is admissible because the shadow-chain piece has length at least \(1\). 
The crossing estimate is the same one-jump estimate used in Lemma~\ref{lemma: Concatenation}. 
Hence \((x,z)\in R\) in either case and so $S_R$ is a $F$-stream.
%
\end{proof}
\begin{lemma}
    \label{lemma: eq 1}
    If \(u\oto v\) then, for every \(r\ge0\),
    $$
    F^r(u)\oto v
    \qquad\text{and}\qquad
    u\oto F^r(v).
    $$
\end{lemma}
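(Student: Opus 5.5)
Both claims follow by directly modifying a Conley chain. The second claim is easy; the first requires keeping the first transition time at least $T$ after shifting its starting point forward by $r$.

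\emph{Second claim: $u\oto F^r(v)$.} Fix $\varepsilon>0$ and $T>0$. By continuity of $F^r$ at $v$, choose $\delta\in(0,\varepsilon]$ such that $d(w,v)<\delta$ implies $d(F^r(w),F^r(v))<\varepsilon$. Take a $(\delta,T)$-chain $u=x_0,\dots,x_N=v$ with times $t_i\ge T$. Replace the last time $t_{N-1}$ by $t_{N-1}+r\ge T$ and the last point by $F^r(v)$; all other points and times are unchanged. The earlier legs still have error less than $\delta\le\varepsilon$. For the last leg,
$$d\bigl(F^{t_{N-1}+r}(x_{N-1}),F^r(v)\bigr)=d\bigl(F^r(F^{t_{N-1}}(x_{N-1})),F^r(v)\bigr)<\varepsilon,$$
because $d(F^{t_{N-1}}(x_{N-1}),v)<\delta$. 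This gives an $(\varepsilon,T)$-chain from $u$ to $F^r(v)$.

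\emph{First claim: $F^r(u)\oto v$.} Fix $\varepsilon>0$ and $T>0$, and take an $(\varepsilon,T+r)$-chain $u=x_0,\dots,x_N=v$ with times $t_i\ge T+r$. Replace the first point by $F^r(u)$ and the first time by $t_0-r\ge T$. Then
$$F^{t_0-r}(F^r(u))=F^{t_0}(u),$$
so the first leg keeps exactly the same error. All other legs are unchanged, so the result is an $(\varepsilon,T)$-chain from $F^r(u)$ to $v$.

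Since $\varepsilon$ and $T$ were arbitrary in both cases, $F^r(u)\oto v$ and $u\oto F^r(v)$.

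The only point needing care is the choice of the time parameter in the first claim: the chain must be requested with times at least $T+r$ rather than $T$, so that subtracting $r$ from the first time keeps it at least $T$. This works because $x\oto y$ gives chains for every $T$. No compactness or uniform continuity beyond continuity of $F^r$ at the single point $v$ is needed.
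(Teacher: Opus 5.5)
Your proposal is correct and is essentially the paper's proof: for the first claim you take an $(\varepsilon,T+r)$-chain and subtract $r$ from the first transition time. For the second claim you use continuity of $F^r$ at $v$ to choose $\delta$, take a $(\delta,T)$-chain, and add $r$ to the last transition time while replacing the endpoint by $F^r(v)$.
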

\begin{proof}
    Fix \(\varepsilon>0\) and \(T>0\). 
    To prove the first implication, take an \((\varepsilon,T+r)\)-chain from \(u\) to \(v\),
\[
u=x_0,x_1,\ldots,x_n=v,
\qquad t_i\ge T+r.
\]
    Then
\[
F^r(u),x_1,\ldots,x_n=v
\]
    with transition times
\[
t_0-r,t_1,\ldots,t_{n-1}
\]
    is an \((\varepsilon,T)\)-chain from \(F^r(u)\) to \(v\).

    To prove the second implication, choose \(\delta\in(0,\varepsilon)\) such that
\[
d(w,v)<\delta
\quad\Longrightarrow\quad
d(F^r(w),F^r(v))<\varepsilon .
\]
    Take a \((\delta,T)\)-chain from \(u\) to \(v\),
\[
u=x_0,x_1,\ldots,x_n=v,
\qquad t_i\ge T.
\]
    Then
\[
u=x_0,x_1,\ldots,x_{n-1},F^r(v)
\]
    with transition times
\[
t_0,\ldots,t_{n-2},t_{n-1}+r
\]
    is an \((\varepsilon,T)\)-chain from \(u\) to \(F^r(v)\), which proves the claim.
\end{proof}

\begin{lemma}
\label{lemma: x >= x} 
For the Conley and shadow chain relations,
\[
x\in \cR_\cC \iff x\oto x,
\qquad
x\in \cR_\cS \iff x\tto x.
\]
\end{lemma}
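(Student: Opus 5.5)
We prove the two equivalences separately, each by a forward and a backward implication. Recall that $x\in\cR_R$ means there is $t>0$ with $x\ato F^t(x)$ and $F^t(x)\ato x$.

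\emph{Conley case.} For ($\Rightarrow$), suppose $x\oto F^t(x)$ and $F^t(x)\oto x$ for some $t>0$. By transitivity of $\cC$ (Proposition~\ref{prop: closed and transitive}) we get $x\oto x$. Even more directly, Lemma~\ref{lemma: eq 1} applied to $F^t(x)\oto x$ with $u=x$ gives the same conclusion. For ($\Leftarrow$), suppose $x\oto x$ and fix any $t>0$. Lemma~\ref{lemma: eq 1} with $u=v=x$ and $r=t$ gives both $F^t(x)\oto x$ and $x\oto F^t(x)$. Hence $x$ is $\cC$-equivalent to $F^t(x)$, so $x\in\cR_\cC$.

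\emph{Shadow case, ($\Rightarrow$).} This direction is again transitivity of $\cS$ (Proposition~\ref{prop: closed and transitive}): $x\tto F^t(x)\tto x$ gives $x\tto x$.

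\emph{Shadow case, ($\Leftarrow$).} Assume $x\tto x$ and fix $t=1$. We first show $x\tto F^1(x)$. Given $\eps$, use Lemma~\ref{lemma: Concatenation} at the point $y=x$ to find $\delta$. Take a $\delta$-loop at $x$ and append the exact orbit segment $s\mapsto F^s(x)$, $s\in[0,1]$, which is a $\delta$-chain from $x$ to $F^1(x)$ of length $1$. The concatenation is an $\eps$-chain from $x$ to $F^1(x)$. (The same appending was used in Lemma~\ref{lemma: OC0}; alternatively the orbit segment alone already works, since its length is $1$.)

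We next show $F^1(x)\tto x$. This is the delicate step, because we cannot simply cut the beginning off a loop. Use Corollary~\ref{cor: long loops} to take a $\delta$-loop $\gamma:[0,T]\to X$ at $x$ with $T\ge 2$. The restriction $\gamma|_{[1,T]}$ is $\delta$-close to $F$ and has length at least $1$, and it runs from $\gamma(1)$ to $x$. By \eqref{eq: S} with $t=0$ and $\tau=1$ we have $d(\gamma(1),F^1(x))<\delta$. Now replace the initial point $\gamma(1)$ by $F^1(x)$, exactly as in the closedness argument for $\cS$ in Proposition~\ref{prop: closed and transitive}. Choose $\delta$ so small, using continuity of $F$ on the compact set $[0,1]\times\{F^1(x)\}$, that $d(u,F^1(x))<\delta$ implies $\sup_{\tau\in[0,1]}d(F^\tau u,F^\tau F^1(x))<\eps/2$. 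The modified curve then satisfies \eqref{eq: S} with error less than $\eps$: at the initial time this follows from the triangle inequality, and elsewhere the estimate is unchanged. Hence $F^1(x)\tto x$, so $x\in\cR_\cS$.

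The main obstacle is this last step. It requires choosing $\delta$ in terms of the uniform continuity at $F^1(x)$ and ensuring the truncated loop still has length at least $1$; Corollary~\ref{cor: long loops} handles the latter.
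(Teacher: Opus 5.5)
Your proof is correct and follows the paper's argument essentially step for step. The forward directions are transitivity, the Conley converse is Lemma~\ref{lemma: eq 1} with $u=v=x$, and the shadow converse combines the exact orbit segment with a long loop cut at time $1$ whose initial point is replaced by $F^1(x)$. The only slip is the aside that Lemma~\ref{lemma: eq 1} applied to $F^t(x)\oto x$ yields $x\oto x$: that lemma only pushes endpoints forward along orbits, so this step would need the ``add $t$ to the first transition time'' trick from Lemma~\ref{lemma: OC2}, though your transitivity argument already suffices.
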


\begin{proof}
Consider first the Conley relation.
If \(x\in \cR_\cC\), then, by definition, there exists \(t>0\) such that \(x\) is Conley chain-equivalent to \(F^t(x)\).
Thus,
\[
x\succeq_C F^t(x)
\qquad\text{and}\qquad
F^t(x)\succeq_C x.
\]
By transitivity,
\[
x\succeq_C x.
\]

Conversely, assume \(x\oto x\). 
By Lemma~\ref{lemma: eq 1}, applied with \(u=v=x\), we get
\[
x\oto F^1(x)
\qquad\text{and}\qquad
F^1(x)\oto x.
\]
Therefore, \(x\) is Conley chain-equivalent to \(F^1(x)\), and so
\[
x\in \cR_\cC.
\]

Consider now the shadow relation. 
If \(x\in \cR_\cS\), then, by definition, there exists \(t>0\) such that \(x\) is shadow chain-equivalent to \(F^t(x)\). 
Hence, by transitivity,
\[
x\tto x.
\]

Conversely, assume \(x\tto x\). 
By Corollary~\ref{cor: long loops}, for every \(\varepsilon>0\) there are sufficiently accurate shadow loops through \(x\) with arbitrarily large length. 
We show that
\[
F^1(x)\tto x.
\]
Let \(\varepsilon>0\). 
Choose a sufficiently accurate shadow loop
\[
\gamma:[0,L]\to X,\qquad \gamma(0)=\gamma(L)=x,
\]
with \(L>2\). 
By finite-time tracking on \([0,1]\), choosing the loop accuracy small enough gives
\[
d(\gamma(1),F^1(x))
\]
so small that
\[
d(F^\tau(\gamma(1)),F^\tau(F^1(x)))<\varepsilon/2
\qquad\text{for all }\tau\in[0,1].
\]
Define
\[
\widetilde\gamma:[0,L-1]\to X
\]
by
\[
\widetilde\gamma(0)=F^1(x),
\qquad
\widetilde\gamma(t)=\gamma(1+t)\quad\text{for }t>0.
\]
Since \(L-1>1\), this is an admissible curve from \(F^1(x)\) to \(x\). 
For initial times \(t>0\), the shadow estimates are inherited from \(\gamma\). 
For \(t=0\) and \(\tau\in[0,1]\),
\[
d(\widetilde\gamma(\tau),F^\tau(\widetilde\gamma(0)))
=
d(\gamma(1+\tau),F^\tau(F^1(x)))
\]
and this is bounded by
\[
d(\gamma(1+\tau),F^\tau(\gamma(1)))
+
d(F^\tau(\gamma(1)),F^\tau(F^1(x)))
<\varepsilon.
\]
Thus,
\[
F^1(x)\tto x.
\]
On the other hand, the exact orbit segment
\[
r\mapsto F^r(x),\qquad 0\le r\le1,
\]
is an \(\eps\)-chain from \(x\) to \(F^1(x)\) for every $\eps>0$. 
Hence,
\[
x\tto F^1(x).
\]
Therefore \(x\) is shadow chain-equivalent to \(F^1(x)\), and so
\[
x\in \cR_\cS.
\]
\end{proof}

\begin{lemma}[OC1]
\label{lemma: OC1}
Let $R=\cC,\cS$.
Then $x\in\cR_R$ if and only if $\{F^t(x):t\ge0\}$ is an $R$-equivalent set.
\end{lemma}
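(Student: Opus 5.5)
The reverse implication is immediate. If $\{F^t(x):t\ge0\}$ is $R$-equivalent, then in particular $x$ and $F^1(x)$ are $R$-equivalent, so $x\in\cR_R$ by Definition~\ref{def: nodes and graph}. For the direct implication, let $x\in\cR_R$. By Lemma~\ref{lemma: x >= x} we then have $x\ato x$, where $\ato$ denotes $\oto$ or $\tto$. By transitivity it suffices to show that, for every $t\ge0$,
\[
x\ato F^t(x)\qquad\text{and}\qquad F^t(x)\ato x .
\]
Indeed, then any two forward-orbit points $F^s(x),F^t(x)$ satisfy $F^s(x)\ato x\ato F^t(x)$.

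\textbf{Conley case.} Both relations follow directly from Lemma~\ref{lemma: eq 1} applied with $u=v=x$ and $r=t$: it gives $F^t(x)\oto x$ and $x\oto F^t(x)$. Nothing more is needed here.

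\textbf{Shadow case.} First, $x\tto F^t(x)$. For $t\ge1$ the exact orbit segment $[0,t]\ni s\mapsto F^s(x)$ is an $\eps$-chain for every $\eps$. For $t\in[0,1)$ we use transitivity of $\cS$ (Proposition~\ref{prop: closed and transitive}): from $x\tto x$ and the extension step in the proof of Lemma~\ref{lemma: OC0} (appending an exact orbit segment to a shadow chain of length $\ge1$, with the one-jump estimate of Lemma~\ref{lemma: Concatenation}) we get $x\tto F^t(x)$. Second, $F^t(x)\tto x$. Here I would repeat the argument of Lemma~\ref{lemma: x >= x} with $1$ replaced by $t$. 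By Corollary~\ref{cor: long loops}, pick a $\delta$-loop $\gamma:[0,L]\to X$ through $x$ with $L>t+1$. Define $\widetilde\gamma:[0,L-t]\to X$ by $\widetilde\gamma(0)=F^t(x)$ and $\widetilde\gamma(s)=\gamma(t+s)$ for $s>0$. This curve has length $\ge1$, and all shadow estimates with initial time $s>0$ are inherited from $\gamma$. For $s=0$ we need
\[
d\bigl(\gamma(t+\tau),F^\tau(F^t(x))\bigr)\le d\bigl(\gamma(t+\tau),F^\tau(\gamma(t))\bigr)+d\bigl(F^\tau(\gamma(t)),F^{\tau}(F^t(x))\bigr)<\eps ,
\]
which holds as soon as $\gamma(t)$ is close enough to $F^t(x)$.

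The main obstacle is exactly this last closeness: the shadow condition only controls $d(\gamma(s+\tau),F^\tau(\gamma(s)))$ for $\tau\le1$, whereas $t$ may exceed $1$. I would handle it by iterating one-unit steps. Write $t=k+\tau_0$ with $\tau_0\in[0,1)$. By continuity of $F$ on the compact sets $[0,1]\times\{F^j(x)\}$, $j=0,\dots,k+1$, choose tolerances backwards, as in the proof of Lemma~\ref{lemma: Concatenation}, so that a $\delta$-chain starting at $x$ satisfies
\[
d\bigl(\gamma(j),F^j(x)\bigr)<\eta_j
\]
inductively in $j$. Each step uses the triangle inequality: the error $d(\gamma(j+1),F^1(\gamma(j)))<\delta$ plus the continuity error $d(F^1(\gamma(j)),F^1(F^j(x)))$, which is small because $\gamma(j)$ is $\eta_j$-close to $F^j(x)$. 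A final step of length $\tau_0$ then gives $\gamma(t)$ within any prescribed distance of $F^t(x)$. This finite-time shadowing estimate is the only genuinely new ingredient. Alternatively, one can avoid it by applying the $t=1$ case (Lemma~\ref{lemma: x >= x}) to the points $F^j(x)$ inductively, provided one first shows they are themselves $\cS$-recurrent. That in turn needs the same kind of estimate, so I would go with the direct iteration.
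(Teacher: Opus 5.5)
Your proof is correct and follows essentially the same route as the paper. In the Conley case you apply Lemma~\ref{lemma: eq 1} with $u=v=x$; in the shadow case you append an exact orbit segment to a loop to get $x\tto F^t(x)$, and you truncate a long $\delta$-loop at time $t$, replacing $\gamma(t)$ by $F^t(x)$, to get $F^t(x)\tto x$. The only difference is that you prove the finite-time tracking estimate $d(\gamma(t),F^t(x))\to 0$ explicitly, by choosing the tolerances $\eta_j$ backwards along the points $F^j(x)$, whereas the paper simply asserts it.
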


\begin{proof}
{\BF$R=\cC$.}
Assume  that \(x\in \cR_\cC\). 
By Lemma~\ref{lemma: x >= x}, 
$
x\oto x
$
so that, applying Lemma~\ref{lemma: eq 1} with \(u=v=x\), we get, for every \(t\ge0\),
\[
x\oto F^t(x)
\qquad\text{and}\qquad
F^t(x)\oto x.
\]
Thus, every point of the forward orbit of \(x\) is Conley chain-equivalent to \(x\). 
By transitivity, any two points of the forward orbit are Conley chain-equivalent. 
Hence,
\[
\{F^t(x):t\ge0\}
\]
is a Conley chain-equivalent set.

Conversely, assume that the forward orbit of \(x\) is a Conley
chain-equivalent set. Since \(F^1(x)\) belongs to this orbit, \(x\) is Conley
chain-equivalent to \(F^1(x)\). Therefore, by definition, \(x\in \cR_\cC\).

\medskip\noindent
{\BF$R=\cS$.}
If $\{F^t(x):t\ge0\}$ is shadow chain-equivalent, then clearly $x\in\cR_\cS$.
Assume now that $x\in\cR_\cS$.
We need to show that $\{F^t(x):t\ge0\}$ is shadow chain-equivalent. 
By transitivity, it is enough to show that
\[
        x\tto F^s(x)
        \qquad\text{and}\qquad
        F^s(x)\tto x
        \qquad\text{for every }s\ge0.
\]

Fix $s\ge0$ and $\varepsilon>0$. Since, by transitivity, $x\tto x$, by Corollary~\ref{cor: long loops} there are
arbitrarily accurate shadow loops based at $x$ with arbitrarily large
lengths.

To prove $x\tto  F^s(x)$, take a sufficiently accurate shadow loop
\[
        \gamma:[0,L]\to X,\qquad \gamma(0)=\gamma(L)=x,
\]
and append to it the exact orbit segment from $x$ to $F^s(x)$. 
Thus, define
\[
        \Gamma:[0,L+s]\to X
\]
by
\[
        \Gamma(t)=\gamma(t)\quad 0\le t\le L,
        \qquad
        \Gamma(L+r)=F^r(x)\quad 0\le r\le s.
\]
If the shadow error of $\gamma$ is sufficiently small, then $\Gamma$ is an $\varepsilon$-shadow chain from $x$ to $F^s(x)$. 
The only point to check is when the interval $[t,t+\tau]$ crosses the
joining time. 
This can be done using the same one-jump estimate argument used in the proof of Lemma~\ref{lemma: Concatenation}. 
Therefore, after choosing the error of the first piece small enough, the concatenated curve is a shadow $\varepsilon$-chain.
Therefore $x\tto  F^s(x)$.

To prove $F^s(x)\tto  x$, take a sufficiently accurate shadow loop
\[
        \gamma:[0,L]\to X,\qquad \gamma(0)=\gamma(L)=x,
\]
with $L>s+1$.
This shadow loop exists by Corollary~\ref{cor: long loops}. 
By finite-time tracking on
$[0,s]$, if the shadow error of $\gamma$ is sufficiently small, then
\[
        d(\gamma(s),F^s(x))
\]
is so small that
\[
        d(F^\tau(\gamma(s)),F^\tau(F^s(x)))<\varepsilon/2
        \qquad\text{for all }\tau\in[0,1].
\]
Define
\[
        \widetilde\gamma:[0,L-s]\to X
\]
by
\[
        \widetilde\gamma(0)=F^s(x),
        \qquad
        \widetilde\gamma(t)=\gamma(s+t)\quad\text{for }t>0.
\]
Since $L-s>1$, this is an admissible curve. For initial times $t>0$, the
shadow-chain estimates are inherited from $\gamma$. For $t=0$ and
$\tau\in[0,1]$,
\[
\begin{aligned}
d(\widetilde\gamma(\tau),F^\tau(\widetilde\gamma(0)))
&=d(\gamma(s+\tau),F^\tau(F^s(x)))\\
&\le d(\gamma(s+\tau),F^\tau(\gamma(s)))
   +d(F^\tau(\gamma(s)),F^\tau(F^s(x)))\\
&<\varepsilon .
\end{aligned}
\]
Thus, $F^s(x)\tto x$.
\end{proof}
\begin{lemma}[OC2]
    \label{lemma: OC2}
    Let $R=\cC,\cS$.
    Then 
    \[
    (F^t(x),x)\in \cO_F\cup R \text{ for some }t>0
    \quad\Longrightarrow\quad
    x\in \cR_R.
    \]
\end{lemma}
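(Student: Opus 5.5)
By Lemma~\ref{lemma: x >= x}, it suffices to prove $x\oto x$ (resp.\ $x\tto x$). We split into the two cases $(F^t(x),x)\in\cO_F$ and $(F^t(x),x)\in R$.

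\textbf{Orbit case.} Here $x=F^{s}(F^t(x))=F^{t+s}(x)$ for some $s\ge0$, so $x$ is periodic with period $p=t+s>0$.

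For $R=\cC$, given $\eps>0$ and $T>0$, choose an integer $k$ with $kp\ge T$. The one-leg chain $x,x$ with transition time $kp$ has zero error, since $F^{kp}(x)=x$. Hence $x\oto x$.

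For $R=\cS$, choose $k$ with $kp\ge1$ and take the exact orbit segment $\gamma(r)=F^r(x)$ for $r\in[0,kp]$. It is an $\eps$-loop at $x$ for every $\eps>0$, so $x\tto x$.

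\textbf{Relation case, $R=\cC$.} Assume $F^t(x)\oto x$. Lemma~\ref{lemma: eq 1} gives $F^t(x)\oto x$ directly, but I want the opposite step $x\oto F^t(x)$ in order to close a loop. I get it by prepending time. Fix $\eps,T$ and take an $(\eps,T)$-chain $F^t(x)=c_0,\dots,c_n=x$ with times $t_i$. Replacing $c_0$ by $x$ and $t_0$ by $t_0+t$ yields an $(\eps,T)$-chain from $x$ to $x$, because $F^{t_0+t}(x)=F^{t_0}(c_0)$. Hence $x\oto x$.

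\textbf{Relation case, $R=\cS$.} Assume $F^t(x)\tto x$ and fix $\eps>0$. Prepend the exact orbit segment $r\mapsto F^r(x)$, $r\in[0,t]$, to a $\delta$-chain from $F^t(x)$ to $x$. The resulting curve starts and ends at $x$ and has length at least $1$, since the shadow piece alone has length at least $1$. Two kinds of windows need checking.
\begin{itemize}
\item Windows $[t',t'+\tau]$ lying entirely in either piece are fine: the orbit piece has zero error, and the shadow piece has error less than $\delta$.
\item Windows crossing the junction time $t$ are handled by the one-jump estimate of Lemma~\ref{lemma: Concatenation}, with $y=F^t(x)$. The orbit piece is an exact $\delta$-chain ending at $y$. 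Choosing $\delta$ from that lemma, applied to the point $y=F^t(x)$, makes the crossing error less than $\eps$.
\end{itemize}
The only subtlety is that Lemma~\ref{lemma: Concatenation} formally asks both pieces to be shadow chains of length at least $1$, whereas the orbit piece may have $t<1$. Its proof, however, uses only the tracking inequality on the first piece, and this holds trivially, with zero error, for an exact orbit segment of any length. This is exactly the remark already used in the proof of Lemma~\ref{lemma: OC0}. So we obtain an $\eps$-loop at $x$ for every $\eps$, hence $x\tto x$, and Lemma~\ref{lemma: x >= x} gives $x\in\cR_\cS$.

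The main obstacle is this junction estimate for a possibly short orbit piece. If needed, I would sidestep it by first replacing $t$ with a multiple $kt\ge1$, noting that the orbit segment from $x$ to $F^t(x)$ can be extended by any number of periods only in the periodic case. So the direct one-jump argument is the route I would take.
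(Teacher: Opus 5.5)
Your proof is correct and follows the paper's argument essentially step for step: the periodic case via an exact orbit loop of length $kp$, the Conley case by absorbing $t$ into the first transition time, and the shadow case by prepending the orbit segment $r\mapsto F^r(x)$, $r\in[0,t]$. The only minor difference is at the junction: the paper notes that, because the orbit piece is exact, the crossing error over a window $[t-a,t+b]$ is exactly $d\bigl(\gamma(b),F^b(\gamma(0))\bigr)$, so no continuity-based $\delta$ from Lemma~\ref{lemma: Concatenation} is needed and your concern about a short orbit piece does not arise.
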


\begin{proof}
{\BF $R=\cC$.} Assume that, for some \(t>0\),
\[
(F^t(x),x)\in \cO_F\cup\cC.
\]
There are two cases.
First, suppose that
\[
F^t(x)\oto x.
\]
We show that \(x\oto x\). 
Let \(\varepsilon>0\) and \(T>0\). 
Since \(F^t(x)\oto x\), there exists an \((\varepsilon,T+t)\)-chain
\[
F^t(x)=x_0,x_1,\ldots,x_n=x
\]
with transition times \(s_i\ge T+t\). 
Then
\[
x,x_1,\ldots,x_n=x
\]
with transition times
\[
s_0+t,s_1,\ldots,s_{n-1}
\]
is an \((\varepsilon,T)\)-chain from \(x\) to itself, since
\[
F^{s_0+t}(x)=F^{s_0}(F^t(x)).
\]
Hence, \(x\oto x\) and so, by Lemma~\ref{lemma: x >= x}, \(x\in\cR_\cC\).

Second, suppose that
\[
(F^t(x),x)\in \cO_F.
\]
Then there exists \(s\ge0\) such that
\[
F^s(F^t(x))=x.
\]
Thus,
\[
F^{t+s}(x)=x,
\]
with \(t+s>0\). 
For every \(\varepsilon>0\) and \(T>0\), choose an integer \(m\ge1\) such that
\[
m(t+s)\ge T.
\]
Then the one-leg exact orbit segment from \(x\) to itself with transition time
\(m(t+s)\) is an \((\varepsilon,T)\)-chain from \(x\) to itself. 
Hence
\(x\oto x\) and so, by Lemma~\ref{lemma: x >= x}, \(x\in \cR_\cC\).

\medskip\noindent
{\BF$R=\cS$.}
Assume that, for some \(t>0\),
\[
(F^t(x),x)\in \cO_F\cup\cS.
\]
As above, there are two cases.

First, suppose that
\[
F^t(x)\tto x.
\]
We show that \(x\tto x\). 
Fix \(\varepsilon>0\). Choose a sufficiently
accurate shadow chain
\[
\gamma:[0,L]\to X
\]
from \(F^t(x)\) to \(x\). 
Define a curve
\[
\Gamma:[0,t+L]\to X
\]
by
\[
\Gamma(r)=F^r(x)\quad 0\le r\le t,
\]
and
\[
\Gamma(t+r)=\gamma(r)\quad 0\le r\le L.
\]
If the shadow error of \(\gamma\) is sufficiently small, then \(\Gamma\) is an \(\varepsilon\)-loop through \(x\). 
Indeed, away from the joining time \(t\), the estimate is either exact or inherited from \(\gamma\). 
If an interval crosses \(t\), write it as \([t-a,t+b]\), with \(a,b\ge0\) and \(a+b\le1\). 
Then
\[
F^{a+b}(\Gamma(t-a))
=
F^{a+b}(F^{t-a}(x))
=
F^b(F^t(x)),
\]
while
\[
\Gamma(t+b)=\gamma(b).
\]
Thus, the crossing estimate follows from the shadow-chain estimate for \(\gamma\), since \(\gamma(0)=F^t(x)\). 
Hence \(x\tto x\) and so, by Lemma~\ref{lemma: x >= x}, \(x\in \cR_\cS\).

Second, suppose that
\[
(F^t(x),x)\in \cO_F.
\]
Then, as before, there exists \(s\ge0\) such that
\[
F^{t+s}(x)=x,
\]
with \(t+s>0\). 
Choose an integer \(m\ge1\) such that
\[
m(t+s)\ge1.
\]
The exact periodic orbit segment
\[
r\mapsto F^r(x),\qquad 0\le r\le m(t+s),
\]
is a \(\eps\)-loop through \(x\) for every $\eps>0$. 
Hence \(x\tto x\) and so, by Lemma~\ref{lemma: x >= x}, \(x\in \cR_\cS\). 
%
\end{proof}
\begin{proposition}
    \label{prop: C and O U C pari son}
    Let $R=\cC,\cS$.
    Then $S_R=\cO_F\cup R$ is a stream and $R$ and $S_R$ have the same chain-recurrent set, the same nodes and the same graph.
\end{proposition}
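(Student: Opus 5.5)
The proof is an assembly of the preceding results; no new estimates are needed. Fix $R\in\{\cC,\cS\}$. By Proposition~\ref{prop: closed and transitive}, $R$ is closed and transitive, so $R$ meets the standing hypotheses of Proposition~\ref{prop: chain rel vs chain stream}. I will then check the three conditions (OC0), (OC1), (OC2) of that proposition in turn, each by citing the corresponding lemma.

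For (OC0), Lemma~\ref{lemma: OC0} states that $S_R=\cO_F\cup R$ is closed and transitive. Since it contains $\cO_F$, it is an $F$-stream, which is the first assertion of the statement.

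For (OC1), assume $x\in\cR_R$. The ``only if'' direction of Lemma~\ref{lemma: OC1} says the forward orbit $\{F^t(x):t\ge0\}$ is $R$-equivalent, which is exactly (OC1).

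For (OC2), Lemma~\ref{lemma: OC2} says that $(F^t(x),x)\in\cO_F\cup R$ for some $t>0$ forces $x\in\cR_R$. This is (OC2) verbatim.

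Proposition~\ref{prop: chain rel vs chain stream} then gives that $R$ and $S_R$ have the same recurrent set, the same nodes, and the same graph. The recurrent set of $R$ in the sense of Definition~\ref{def: nodes and graph} is what the statement calls the chain-recurrent set, so this proves the claim for both $R=\cC$ and $R=\cS$.

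The only point needing attention is bookkeeping. One must make sure the notion of recurrence used in Proposition~\ref{prop: chain rel vs chain stream} is the one in Definition~\ref{def: nodes and graph}, applied both to $R$ and to $S_R$. This holds because both relations are closed and transitive, by Proposition~\ref{prop: closed and transitive} and Lemma~\ref{lemma: OC0} respectively. The substantive work, namely the closedness of $\cO_F\cup\cC$ in the $t_n\to\infty$ case and the one-jump shadow estimates, has already been done in Lemmas~\ref{lemma: OC0}--\ref{lemma: OC2}.
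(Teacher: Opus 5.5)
Your proposal is correct and matches the paper's proof: the paper likewise invokes Lemmas~\ref{lemma: OC0}, \ref{lemma: OC1} and~\ref{lemma: OC2} to verify (OC0)--(OC2) and then applies Proposition~\ref{prop: chain rel vs chain stream}. Your citation of Proposition~\ref{prop: closed and transitive}, for the closedness and transitivity of $R$ itself, makes explicit a standing hypothesis that the paper leaves implicit.
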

\begin{proof}
    By Lemmas~\ref{lemma: OC0},~\ref{lemma: OC1} and~\ref{lemma: OC2}, both $\cC$ and $\cS$ satisfy properties (OC0), (OC1), (OC2).
    Hence, the result follows by Proposition~\ref{prop: chain rel vs chain stream}.
\end{proof}

\medskip\noindent
{\BF The binary relations $\cC$ and $\cS$ are not identical.}
This fact is already evident from the example given above of the semiflow of $\dot x=1-x^2$ on $[0,1]$.
Next example shows that the difference between $\cC$ and $\cS$ extends beyond the orbit: there are semiflows for which there are points $x,y$ such that $y$ is not on the orbit of $x$, $x\oto y$ and $x\not\tto y$. 

\begin{example}
\label{ex: C neq S}
Let
\[
A=\{(s,0):s\in\mathbb R\},\qquad
B=\{(s,e^{-s}):s\in\mathbb R\},
\]
and let
\[
X=A\cup B\subset\mathbb R^2
\]
with the metric induced by the Euclidean distance. Define a flow \(F\) on \(X\)
by
\[
F^t(s,0)=(s+t,0),\qquad t\in\mathbb R,
\]
and
\[
F^t(s,e^{-s})=(s-t,e^{-(s-t)}),\qquad t\in\mathbb R.
\]
Thus points on \(A\) move to the right, while points on \(B\) move to the
left.

Notice that $F$ has no compact dynamics. 
Indeed, the positive orbit of
\[
O=(0,0)\in A
\]
is
\[
F^t(O)=(t,0),\qquad t\geq 0,
\]
and escapes every compact subset of \(X\), namely there is no compact subset of \(X\) that attracts the compact set \(\{O\}\).

Let
\[
y=(0,1)\in B.
\]
We show that
\[
O\oto  y
\qquad\text{but}\qquad
O\not\tto  y.
\]

Indeed, let \(\varepsilon>0\) and \(T>0\), choose \(R\geq T\) so large that
$
e^{-R}<\varepsilon
$
and set
\[
c_0=O,\qquad c_1=(R,e^{-R})\in B,\qquad c_2=y.
\]
Then
\[
F^R(c_0)=(R,0),
\]
and therefore
\[
d(F^R(c_0),c_1)=e^{-R}<\varepsilon.
\]
Moreover,
\[
F^R(c_1)=F^R(R,e^{-R})=(0,1)=y=c_2.
\]
Thus \(c_0,c_1,c_2\), with transition times both equal to \(R\), is a
Conley \((\varepsilon,T)\)-chain from \(O\) to \(y\). 
Since \(\varepsilon>0\)
and \(T>0\) were arbitrary, we have
\[
O\oto  y.
\]

We now prove that \(O\not\tto  y\). 
Write
\[
p_A(s)=(s,0),\qquad p_B(s)=(s,e^{-s}),
\]
and define
\[
\pi(p_A(s))=\pi(p_B(s))=s.
\]
Assume, by contradiction, that for every \(\varepsilon>0\) there is an \(\varepsilon\)-chain
\[
\gamma:[0,L]\to X
\]
from \(O\) to \(y\). Choose \(\varepsilon>0\) so small that
\[
-\log\varepsilon>3\varepsilon.
\]

Since \(\gamma(0)=O\in A\) and \(\gamma(L)=y\in B\), the curve \(\gamma\)
must jump at least once from \(A\) to \(B\). Let \(\theta\) be the last time at
which such a jump from \(A\) to \(B\) occurs so that, after time \(\theta\), the
curve remains in \(B\) until the final time \(L\).

Let
\[
a=\lim_{t\to\theta^-}\pi(\gamma(t)),
\qquad
b=\lim_{t\to\theta^+}\pi(\gamma(t)).
\]
Taking \(t\to\theta^-\) and \(s\to\theta^+\) in the shadow estimate
\[
d(\gamma(s),F^{s-t}(\gamma(t)))<\varepsilon,
\qquad 0<s-t\leq 1,
\]
we obtain
\[
d(p_B(b),p_A(a))\leq \varepsilon.
\]
In particular,
\[
|a-b|\le
\varepsilon
\qquad\text{and}\qquad
e^{-b}\leq\varepsilon.
\]
Hence
\[
b\geq -\log\varepsilon.
\]

Now take \(u\in(0,\min\{1,L-\theta\})\). Applying the shadow estimate from a
point just before \(\theta\) to the point \(\gamma(\theta+u)\in B\), and then
letting the initial time tend to \(\theta^-\), gives
\[
\left|\pi(\gamma(\theta+u))-(a+u)\right|\leq\varepsilon.
\]
On the other hand, applying the shadow estimate inside \(B\), from a point just
after \(\theta\) to the same point \(\gamma(\theta+u)\), and then letting the
initial time tend to \(\theta^+\), gives
\[
\left|\pi(\gamma(\theta+u))-(b-u)\right|\leq\varepsilon.
\]
Therefore
\[
|b-a-2u|\leq 2\varepsilon.
\]
Since \(|b-a|\leq\varepsilon\), we get
\[
2u\leq 3\varepsilon.
\]
This holds for every \(u\in(0,\min\{1,L-\theta\})\). Hence
\[
L-\theta\leq \frac{3}{2}\varepsilon.
\]

Finally, applying the shadow estimate inside \(B\) from just after \(\theta\)
to the final time \(L\), and using \(\gamma(L)=y=p_B(0)\), gives
\[
|0-(b-(L-\theta))|\leq\varepsilon.
\]
Thus
\[
b\leq L-\theta+\varepsilon
\leq \frac{5}{2}\varepsilon.
\]
This contradicts
\[
b\geq -\log\varepsilon>3\varepsilon.
\]
Therefore, for all sufficiently small \(\varepsilon>0\), there is no
\(\varepsilon\)-chain from \(O\) to \(y\), namely
\[
O\not\tto  y.
\]
\end{example}
%
%

\noindent
{\BF $\cC$ and $\cS$ for semiflows with ``compact dynamics''.}
As we argued in~\cite{DY25,DLY26,DLY26b,ADY25}, semiflows coming from applications have often some kind of ``compact dynamics'' (see Definition~\ref{def: compact dynamics} below), a fact that grants the fundamental facts that every point has a non-empty limit set and that the set of chain-recurrent points is non-empty. 
It is a natural question whether $\cC$ and $\cS$ lead to the same qualitative description of the dynamics of this kind of semiflows, namely whether they give rise to the same chain-recurrent sets, the same nodes and the same graph.
{\em
The main goal of the present article is proving that this fact does indeed hold true.}

\medskip\noindent
{\bf Semiflows with compact dynamics.}
In~\cite{DLY26}, we introduced the concept of semiflow with compact dynamics as follows.
\begin{definition}[Compact dynamics]
    \label{def: compact dynamics}
    Given an $\eps>0$ and a set $G\subset X$, we set
    $$
    \text{\BF$N_\eps(G)$}=\{y : d(y,G)<\eps\}.
    $$
    Given a semiflow $F$ on $X$, we say that a set $G$ {\bf attracts} a set $K$ under $F$ if, for every $\eps>0$, there exists $T>0$ such that $F^t(K)\subset N_\eps(G)$ for all $t\geq T$.
    The {\bf global attractor} $\cG\subset X$ of a semiflow $F$, when it exists, is a maximal invariant compact set of $X$ that attracts each compact set $K\subset X$.
    We say that $F$ has {\bf compact dynamics} if $F$ has a global attractor.
\end{definition}
When $X$ is locally compact, the global attractor of a semiflow on $X$ with compact dynamics attracts some of its neighborhoods and is uniformly continuous over any compact set $[a,b]\times K\subset[0,\infty)\times X$. 
This facts are used to prove our main results but do not hold necessarily when $X$ is not locally compact, like in case of semiflows generated by partial differential equations.
In order to cover also these important cases, we introduced in~\cite{DLY26b} the concept of strong compact dynamics.
%
\begin{definition}[Strong compact dynamics]
    \label{def: strong compact dynamics}
    We say that a semiflow $F$ is {\BF s-uniformly continuous on $A\subset X$} if, 
    for every $T>0$, the map
    \[
    [0,T]\times A\longrightarrow X,
    \qquad
    (t,x)\longmapsto F^t(x),
    \]
    is uniformly continuous.
    Given an $\eps>0$ and a set $G\subset X$, we set 
    $$
    \text{\BF$W_\eps(G)$} = \bigcup_{t\geq0}F^t(N_\eps(G)).
    $$
    Notice that $W_\eps(G)$ is the smallest forward-invariant neighborhood of $N_\eps(G)$.
    
    We say that a global attractor $\cG$ is {\bf strong} if there is an $\varepsilon>0$ such that $\cG$ attracts $N_\varepsilon(\cG)$ and $F$ is $s$-uniformly continuous
    on $W_\varepsilon(\cG)$.
    If $F$ has a strong global attractor, we say that $F$ has {\bf strong compact dynamics}.
\end{definition}
\noindent
{\bf s-uniform continuity.}
The definition above is stronger than and supersedes the one introduced by the authors in~\cite{DLY26b}, which turns out to be not strong enough to prove some of the main results of that work. 
In turn, this modification also implicitly strengthens the definition of {\em strong attractor} and {\em strong compact dynamics} given in~\cite{DLY26b}.

\medskip
As pointed out above, 
every semiflow with compact dynamics over a locally compact space has strong compact dynamics.
In particular, when $X$ is compact, every semiflow has (strong) compact dynamics.
An important class of semiflows with strong compact dynamics in infinite dimension are the semiflows generated by dissipative reaction-diffusion PDEs (see~\cite{DLY26b} for more details).

{\BF
From this point on, we assume that $F$ has strong compact dynamics and we denote its (strong) global attractor by $\cG$.}

\section{Main results}
\label{sec: main results}
The following crucial result is a re-writing of Theorems~9 and~10 in~\cite{DLY26b}.
\begin{thmX}
    \label{thm: CR G}
    Let $F$ be a semiflow with strong compact dynamics. 
    Denote by $F_\cG$ the restriction of $F$ to its global attractor and by $\cR_\cC$ its Conley chain-recurrent set and by $\cR_{\cC_\cG}$ the Conley chain-recurrent set of $F_\cG$.
    Then:
    \begin{enumerate}
        \item $\cR_{\cC_{\cG}}=\cR_{\cC}$;
        \item $N$ is a node of $\cR_\cC$ if and only if it is a node of $\cR_{\cC_{\cG}}$;
        \item $\Gamma_{\cC_{\cG}}=\Gamma_{\cC}$.
    \end{enumerate}
\end{thmX}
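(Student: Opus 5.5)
Although this is a restatement of Theorems~9 and~10 of~\cite{DLY26b}, I would reprove it directly under the strengthened Definition~\ref{def: strong compact dynamics}, since that definition is exactly what the argument needs. The plan has two halves. The first shows that every Conley chain-recurrent point of $F$ lies in $\cG$. The second shows that, for $x,y\in\cG$, $x\oto y$ in $X$ if and only if $x\otog y$ in $\cG$. Points (1)--(3) then follow at once from Definition~\ref{def: nodes and graph}: the recurrent sets coincide, equivalence among points of $\cG$ is the same for both relations, and edges are determined by pairs of points in the nodes.

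For the first half, let $x\in\cR_\cC$. By Lemma~\ref{lemma: OC1}, the whole forward orbit of $x$ is $\cC$-equivalent to $x$. Since $\{x\}$ is compact, $d(F^t(x),\cG)\to0$, so by closedness of $\cC$ (Proposition~\ref{prop: closed and transitive}) the points of $\omega(x)\subset\cG$ are $\cC$-equivalent to $x$. It remains to show $x\in\cG$. Fix $\eta>0$ with $\eta$ smaller than the $\eps$ of Definition~\ref{def: strong compact dynamics}. Because $\cG$ attracts $N_\eps(\cG)$, there is a time $T_\eta$ with $F^t(N_\eps(\cG))\subset N_\eta(\cG)$ for all $t\ge T_\eta$. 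Consider a $(\delta,T)$-chain from a point $z\in\omega(x)$ back to $x$, with $\delta$ small and $T\ge T_\eta$. Starting in $\cG$, each leg lands within $\delta$ of $F^{t_i}(c_i)$. By induction, if $c_i\in N_{\eps}(\cG)$ then $F^{t_i}(c_i)\in N_\eta(\cG)$, hence $c_{i+1}\in N_{\eta+\delta}(\cG)\subset N_\eps(\cG)$. So $x\in N_{\eta+\delta}(\cG)$. Letting $\eta,\delta\to0$ gives $x\in\overline{\cG}=\cG$.

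For the second half, the implication $x\otog y\Rightarrow x\oto y$ is trivial. For the converse, take an $(\delta,T)$-chain in $X$ from $x\in\cG$ to $y\in\cG$. First refine it so that all its times lie in $[T,2T]$, as in the proof of Proposition~\ref{prop: closed and transitive}. By the argument above, all its points lie in $N_{\eta+\delta}(\cG)$. I would then replace each $c_i$ by a nearest point $g_i\in\cG$, with $g_0=x$ and $g_N=y$. The estimate needed is $d(F^{t_i}(g_i),g_{i+1})\le d(F^{t_i}(g_i),F^{t_i}(c_i))+\delta+d(c_{i+1},g_{i+1})$.

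The main obstacle is the first term, $d(F^{t_i}(g_i),F^{t_i}(c_i))$. It requires uniform continuity of $F$ on $[0,2T]\times W_\eps(\cG)$, uniformly in the base point, because the $c_i$ range over a neighbourhood of $\cG$ that is not compact. This is precisely where the strengthened s-uniform continuity on $W_\eps(\cG)$ is used, and where the weaker earlier definition failed. Given $\eps'$ and $T$, I would first fix $\rho$ from uniform continuity on $[0,2T]\times W_\eps(\cG)$. I would then choose $\eta,\delta$ with $\eta+\delta<\rho$ and $2(\eta+\delta)+\delta<\eps'$, which yields an $(\eps',T)$-chain inside $\cG$. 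One subtlety is that $T_\eta$ depends on $\eta$. This is harmless, because $x\oto y$ supplies chains for every $T$, so I may take $T\ge T_\eta$ and then build the chain in $\cG$ with that larger $T$; the result is an $(\eps',T')$-chain for any prescribed $T'\le T$.
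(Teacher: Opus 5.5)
Your first half ($\cR_\cC\subset\cG$) is correct, and it only uses that $\cG$ attracts $N_\eps(\cG)$. The deduction of (1)--(3) from the two halves is also correct. The paper itself gives no argument here; it only cites \cite{DLY26b}.

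The gap is in the second half, at the step you call harmless. Your constants depend on each other in a loop:
\begin{itemize}
\item $\rho$ is a modulus of uniform continuity on $[0,2T]\times W_\eps(\cG)$, so it depends on $T$;
\item you need $\eta+\delta<\rho$;
\item your confinement induction needs every transition time to be at least $T_\eta$, and $T_\eta$ grows as $\eta$ shrinks.
\end{itemize}
Suppose $T_\eta>T$ and you pass to a chain with times $\ge T_\eta$, refined into $[T_\eta,2T_\eta]$. Then bounding $d(F^{t_i}(g_i),F^{t_i}(c_i))$ needs the modulus on $[0,2T_\eta]$, not the $\rho$ you fixed for $[0,2T]$. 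Recomputing it forces a smaller $\eta$, hence a larger $T_\eta$, and the regress never closes. Suppose instead you refine down to the original window $[T,2T]$. The continuity estimate is then available, but the inserted orbit points $F^{kt_i/q}(c_i)$ are reached after times possibly shorter than $T_\eta$. Your induction gives no bound on their distance to $\cG$, so the projection step $c\mapsto g$ is uncontrolled.

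The missing ingredient is Lyapunov stability of $\cG$ (Lemma~\ref{lemma: Lyap}). Its constant comes from compactness of $\cG$ and attraction of $N_\eps(\cG)$, not from a modulus on a long time window, and this breaks the loop. Order the choices as follows:
\begin{enumerate}
\item Given $\eps'$ and $T$, take $\rho$ such that $u,v\in W_\eps(\cG)$ and $d(u,v)<\rho$ imply $d(F^s(u),F^s(v))<\eps'/3$ for all $s\in[0,2T]$.
\item Pick $r<\min\{\rho,\eps'/3,\eps\}$.
\item By Lemma~\ref{lemma: Lyap}, pick $a$ with $F^t(N_a(\cG))\subset N_r(\cG)$ for all $t\ge0$.
\item Pick $\eta,\delta$ with $\eta+\delta<a$ and $\delta<\eps'/3$.
\item Only now take a $(\delta,\max\{T,T_\eta\})$-chain from $x$ to $y$.
\end{enumerate}
Your induction puts every $c_i$ in $N_{\eta+\delta}(\cG)\subset N_a(\cG)$. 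Now refine each leg into legs with times in $[T,2T]$, not $[T_\eta,2T_\eta]$. The inserted exact orbit points lie in $N_r(\cG)$ by Lyapunov stability. So every point of the refined chain is within $r<\rho$ of $\cG$. Projecting to nearest points of $\cG$ then gives leg errors below $\eps'/3+\delta+r<\eps'$.

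This is the same device the paper uses for shadow chains in Lemma~\ref{prop: G is a trapping region for R_S}: Lyapunov stability plus attraction handle confinement, and a continuity modulus is needed only on a fixed short window. That is what makes Lemma~\ref{lem:shadow-relation-restricts-to-attractor} go through.
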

%


Our first result is Theorem~\ref{thm: R_S subset G}, which is the analogue of Theorem~A for shadow chains. 
We start with 
four preparatory lemmas.

\begin{lemma}
    \label{lemma: Lyap}
    Let $\eps>0$ be small enough that $\cG$ attracts $N_\eps(\cG)$ and let $r\in(0,\eps)$.
    Then there exists an $a\in(0,\eps)$ such that
    $$
    F^t(N_a(\cG))\subset N_r(\cG)
    $$
    for all $t\geq0$,
    i.e. $\cG$ is Lyapunov stable.
\end{lemma}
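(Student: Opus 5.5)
The plan is to split time into a large-time regime, handled by the attraction hypothesis, and a compact initial time interval, handled by continuity near $\cG$. No dynamics beyond invariance of $\cG$ and the definitions in Definitions~\ref{def: compact dynamics} and~\ref{def: strong compact dynamics} should be needed.

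\emph{Step 1 (large times).} Since $\cG$ attracts $N_\eps(\cG)$, applying the definition of attraction with the tolerance $r$ gives a $T>0$ such that $F^t(N_\eps(\cG))\subset N_r(\cG)$ for all $t\ge T$. Any $a\in(0,\eps)$ satisfies $N_a(\cG)\subset N_\eps(\cG)$. Hence for every such $a$ the required inclusion already holds for $t\ge T$.

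\emph{Step 2 (times in $[0,T]$).} Here I would use that $F$ is s-uniformly continuous on $W_\eps(\cG)$. This set contains $N_\eps(\cG)$, and hence both $\cG$ and $N_a(\cG)$ for $a<\eps$. So the map $(t,x)\mapsto F^t(x)$ is uniformly continuous on $[0,T]\times W_\eps(\cG)$. This yields $a\in(0,\eps)$ such that, for $x,g\in W_\eps(\cG)$ with $d(x,g)<a$, we have $d(F^t(x),F^t(g))<r$ for all $t\in[0,T]$.

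Now take $x\in N_a(\cG)$. By definition of $N_a$ there is $g\in\cG$ with $d(x,g)<a$. Invariance of $\cG$ gives $F^t(g)\in\cG$, so $d(F^t(x),\cG)\le d(F^t(x),F^t(g))<r$ for $t\in[0,T]$. Combining with Step~1 gives $F^t(N_a(\cG))\subset N_r(\cG)$ for all $t\ge0$.

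Alternatively, Step~2 could avoid s-uniform continuity altogether. One would use joint continuity of $F$ on the compact set $[0,T]\times\cG$ and a standard tube/Lebesgue-number argument. This gives a uniform $a$ such that $d(x,g)<a$ with $g\in\cG$ implies $d(F^t(x),F^t(g))<r$ for all $t\in[0,T]$.

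The main obstacle is only the uniformity of $a$ with respect to both $t\in[0,T]$ and the base point $g\in\cG$. Pointwise continuity would give an $a$ depending on $g$, which is why either the s-uniform continuity on $W_\eps(\cG)$ or the compactness of $[0,T]\times\cG$ is needed. I would use the s-uniform continuity version, since it is immediate from the standing hypothesis.
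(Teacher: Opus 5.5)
Your proof is correct. Step~1 is the same as the paper's. The ``alternative'' you sketch for Step~2 (joint continuity of $F$ at the points of the compact set $[0,T]\times\cG$, a finite-subcover argument, and invariance of $\cG$) is exactly the paper's argument; the paper never uses s-uniform continuity for this lemma.

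Your preferred version takes the uniformity in $(t,g)$ directly from s-uniform continuity. That is shorter, but it has a small quantifier slip. The lemma only assumes that $\cG$ attracts $N_\eps(\cG)$, while strong compact dynamics gives s-uniform continuity on $W_{\eps_0}(\cG)$ for \emph{some} $\eps_0$, which may be smaller than the $\eps$ in the statement. So ``immediate from the standing hypothesis'' is not quite right for $W_\eps(\cG)$. The fix is immediate: choose $a<\min\{\eps,\eps_0\}$ and work on $[0,T]\times W_{\eps_0}(\cG)$, which contains both $x\in N_a(\cG)$ and $g\in\cG$.

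The compactness route gives more. It shows that Lyapunov stability follows only from $\cG$ being compact, invariant and attracting a neighborhood, with no uniformity of $F$ away from $\cG$. The lemma then holds for every admissible $\eps$ and without any s-uniform continuity. The paper uses s-uniform continuity later, in Lemmas~\ref{prop: G is a trapping region for R_S} and~\ref{lem:shadow-relation-restricts-to-attractor}, where the estimates compare two points that are near $\cG$ but not on it.
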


\begin{proof}
        \medskip\noindent
    Since $\cG$ attracts $N_{\varepsilon}(\cG)$, there exists $T_r>0$ such that
    \[
    F^t\bigl(N_{\varepsilon}(\cG)\bigr)\subset N_r(\cG)
    \qquad\text{for all }t\geq T_r .
    \]
    Since $\cG$ is compact and invariant, continuity of $F$ on
    $[0,T_r]\times \cG$ implies that there exists $a\in(0,\varepsilon)$ such that
    \[
    F^t\bigl(N_a(\cG)\bigr)\subset N_r(\cG)
    \qquad\text{for all }t\in[0,T_r].
    \]
    Together with the previous inclusion, this implies that
    \[
    F^t\bigl(N_a(\cG)\bigr)\subset N_r(\cG)
    \qquad\text{for all }t\geq0.
\]
\end{proof}

\begin{lemma}
\label{prop: G is a trapping region for R_S}
For every $\rho>0$ small enough, there exists $\delta>0$ such that every $\delta$-chain \[ \gamma:[0,T]\to X \] with $\gamma(0)\in\cG$ satisfies \[ \gamma([0,T])\subset N_\rho(\cG).\] 
In particular, if $x\in \cG$ and $x\tto y$, then $y\in \cG$.
\end{lemma}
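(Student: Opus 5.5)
We prove that a $\delta$-chain started in $\cG$ can never leave $N_\rho(\cG)$, by combining Lyapunov stability (Lemma~\ref{lemma: Lyap}) with a bootstrap over unit time windows.

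\textbf{Choice of constants.} Fix $\eps_0>0$ as in Definition~\ref{def: strong compact dynamics}, so that $\cG$ attracts $N_{\eps_0}(\cG)$ and $F$ is s-uniformly continuous on $W_{\eps_0}(\cG)$. Let $\rho<\eps_0$.
\begin{itemize}
\item Since $\cG$ attracts $N_{\rho}(\cG)\subset N_{\eps_0}(\cG)$, there is an integer $m\geq1$ such that $F^t(N_\rho(\cG))\subset N_{\rho/4}(\cG)$ for all $t\geq m$.
\item By Lemma~\ref{lemma: Lyap}, applied with $r=\rho/2$, there is $a\in(0,\rho/2)$ with $F^t(N_a(\cG))\subset N_{\rho/2}(\cG)$ for all $t\geq0$.
\item Take $\delta<\min\{a/2,\rho/4\}$, to be reduced further below.
\end{itemize}

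\textbf{Step 1: short-time control.} First we show $\gamma([0,\min\{T,1\}])\subset N_\rho(\cG)$. For $\tau\in[0,1]$ we have $d(\gamma(\tau),F^\tau(\gamma(0)))<\delta$, and $F^\tau(\gamma(0))\in\cG$ because $\cG$ is invariant. Hence $\gamma(\tau)\in N_\delta(\cG)$.

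\textbf{Step 2: propagation across each unit window.} Suppose $\gamma(t)\in N_a(\cG)$. Then for every $\tau\in[0,1]$,
\[
\gamma(t+\tau)\in N_\delta\bigl(F^\tau(\gamma(t))\bigr)\subset N_{\rho/2+\delta}(\cG)\subset N_\rho(\cG).
\]
The difficulty is that Step 2 only gives $\gamma(t+1)\in N_{\rho/2+\delta}(\cG)$, which is not back inside $N_a(\cG)$, so the argument cannot simply be iterated. Errors accumulate over windows, and Lyapunov stability alone does not stop the drift.

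\textbf{Step 3: the main obstacle, re-entering $N_a(\cG)$.} This is where s-uniform continuity is needed, to compare the discrete pseudo-orbit with a true orbit over $m$ time units. Consider the unit-step sequence $p_k=\gamma(t+k)$ for $k=0,\dots,m$. It satisfies $d(p_{k+1},F^1(p_k))<\delta$, and we inductively keep $p_k\in N_\rho(\cG)\subset W_{\eps_0}(\cG)$. Uniform continuity of $F$ on $[0,m]\times W_{\eps_0}(\cG)$ gives a modulus $\omega$ with $\omega(s)\to0$ as $s\to0$. A telescoping estimate then yields
\[
d\bigl(p_m,F^m(p_0)\bigr)\leq\sum_{k=1}^{m}\omega_k(\delta),
\]
where each $\omega_k(\delta)\to0$ as $\delta\to0$. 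Here is where $\delta$ gets reduced: choose $\delta$ so small that this sum is less than $\min\{a/2,\rho/4\}$ and also that every intermediate point stays in $N_\rho(\cG)$. Since $F^m(p_0)\in N_{\rho/4}(\cG)$, we get $p_m\in N_{\rho/2}(\cG)$.

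\textbf{Closing the induction.} The bound $p_m\in N_{\rho/2}(\cG)$ is still not inside $N_a(\cG)$. To fix this, choose $m$ using attraction toward $N_{a/2}(\cG)$ instead of $N_{\rho/4}(\cG)$, i.e. take $m$ with $F^t(N_\rho(\cG))\subset N_{a/2}(\cG)$ for all $t\geq m$. The same estimate then gives $p_m\in N_a(\cG)$, which closes the induction on blocks of length $m$. Within each block, Step 2 together with the same uniform-continuity bound keeps $\gamma$ inside $N_\rho(\cG)$. A final block shorter than $m$ is handled by the same estimate. This proves the first claim.

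\textbf{The ``in particular'' statement.} If $x\in\cG$ and $x\tto y$, then $y\in\overline{N_\rho(\cG)}$ for every small $\rho$. Since $\cG$ is compact, hence closed, it follows that $y\in\cG$.
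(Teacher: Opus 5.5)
Your proof is correct and follows essentially the same route as the paper. Both arguments use Lyapunov stability together with attraction over $m$ time units to bring $\gamma$ back into $N_a(\cG)$, control the shadowing error over $m$ time units via s-uniform continuity on $W_{\eps_0}(\cG)$, and then iterate over blocks of length $m$. The only difference is cosmetic: you obtain the $m$-step bound by telescoping with the modulus of continuity on $[0,m]\times W_{\eps_0}(\cG)$, whereas the paper iterates the unit-time estimate on $[0,1]\times W_{\eps_0}(\cG)$.
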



\begin{proof}
Let $\varepsilon_0>0$ be such that $G$ attracts $N_{\varepsilon_0}(G)$
and $F$ is uniformly continuous on
$
[0,1]\times W_{\varepsilon_0}(G).
$
Fix \(\rho\in(0,\varepsilon_0)\).

\medskip\noindent
{\bf Step 1: Shadow chains starting near \(\cG\) remain close to \(\cG\).}
By Lemma~\ref{lemma: Lyap}, there is an \(a\in(0,\rho)\)
such that
\[
F^t(N_a(\cG))\subset N_{\rho/3}(\cG)
\qquad\text{for all }t\ge 0 .
\]
Since \(\cG\) attracts \(N_{\varepsilon_0}(\cG)\), there is an integer \(m\ge 1\) such that
\[
F^m(N_{\varepsilon_0}(\cG))\subset N_{a/3}(\cG).
\]
Now, let
$
\eta<\min\{\rho/3,\,2a/3,\,\varepsilon_0-\rho/3\}.
$
Then
there exists $\delta>0$ such that
every restriction 
\[
        \gamma:[0,L]\to X,\qquad 0<L\le m,
\]
of a $\delta$-chain with $\gamma(0)\in N_a(G)$ satisfies
\[
        d(\gamma(t),F^t(\gamma(0)))<\eta
        \qquad\text{for all }t\in[0,L].
\]
Indeed, when $m=1$, this is just a direct consequence of the definition of shadow chain: for any $\delta$-chain with $\delta\in(0,\eta)$ we have
$$
d(\gamma(t),F^t(\gamma(0)))<\delta<\eta\quad\text{for every }t\in[0,1].
$$
Assume now that $m=2$ and let $\tau\in[0,1]$.
Then, by uniform continuity of $F$ on
$[0,1]\times W_{\varepsilon_0}(G)$, there is an $\alpha>0$ such that
\[
        d(u,v)<\alpha,\quad u,v\in W_{\varepsilon_0}(G)
        \quad\Longrightarrow\quad
        d(F^\tau(u),F^\tau(v))<\eta/2
\]
for all $\tau\in[0,1]$. 
Let now 
\(
        \delta\in(0,\min\{\alpha,\eta/2,\varepsilon_0-\rho/3\}).
\)
By the argument above,
\[
        d(\gamma(1),F^1(\gamma(0)))<\delta<\varepsilon_0-\rho/3
\]
Then, since $F^1(\gamma(0))\in N_{\rho/3}(G)$, both $\gamma(1)$ and
$F^1(\gamma(0))$ belong to $W_{\varepsilon_0}(G)$ and therefore, for every
$\tau\in[0,1]$ with $1+\tau\le L$,
\[
\begin{aligned}
d(\gamma(1+\tau),F^{1+\tau}(\gamma(0)))
&\le d(\gamma(1+\tau),F^\tau(\gamma(1))) + d(F^\tau(\gamma(1)),F^\tau(F^1(\gamma(0)))) \\
&< \delta+\eta/2
<\eta .
\end{aligned}
\]
Thus the desired estimate holds on $[0,L]$ when $m=2$.
By repeating this argument any finite number of times, we see that for any finite $m$ there is a $\delta>0$ such that 
\[
        d(\gamma(t),F^t(\gamma(0)))<\eta
        \qquad\text{for all }t\in[0,L].
\]


Hence, for every \(t\in[0,L]\),
\[
\operatorname{dist}(\gamma(t),\cG)
\le d(\gamma(t),F^t(\gamma(0)))+\operatorname{dist}(F^t(\gamma(0)),\cG)
<\eta+\rho/3<\rho,
\]
namely
\beqn
\gamma(t)\in N_\rho(\cG)
\qquad\text{for all }t\in[0,L].
\label{eq: 4}
\eeqn
If \(L=m\), then \(\gamma(0)\in N_a(\cG)\subset N_{\varepsilon_0}(\cG)\) and so
$
F^m(\gamma(0))\in N_{a/3}(\cG)
$
and
\[
\operatorname{dist}(\gamma(m),\cG)
\le d(\gamma(m),F^m(\gamma(0)))+\operatorname{dist}(F^m(\gamma(0)),\cG)
<\eta+a/3<a.
\]
Hence,
\beqn
\gamma(m)\in N_a(\cG).
\label{eq: 5}
\eeqn

\medskip\noindent
{\bf Step 2: Iterate the estimate.}
Let
$
        \gamma:[0,T]\to X
$
be a $\delta$-chain with $\gamma(0)\in \cG$. Since
$\cG\subset N_a(\cG)$, Step 1 applies to the restricted chain
\[
        \gamma|_{[0,\min\{m,T\}]} .
\]
Hence
\[
        \gamma(t)\in N_\rho(\cG)
        \qquad\text{for all }t\in[0,\min\{m,T\}].
\]
If $T\le m$, we are done. If $T>m$, then~\eqref{eq: 5} gives
\[
        \gamma(m)\in N_a(\cG).
\]
Therefore Step 1 applies again to the shifted shadow chain
\[
        s\longmapsto \gamma(m+s).
\]
Repeating this argument on the consecutive intervals
\[
        [0,m],\ [m,2m],\ [2m,3m],\ldots
\]
and then applying~\eqref{eq: 4} on the final interval of length at most $m$, we
obtain
\[
        \gamma(t)\in N_\rho(\cG)
        \qquad\text{for all }t\in[0,T].
\]
This proves the first assertion.

Now, suppose $x\in \cG$ and $x\tto y$. For the $\delta>0$ above,
choose a $\delta$-chain from $x$ to $y$. 
By the first assertion, its endpoint lies in $N_\rho(\cG)$. 
Since $\rho>0$ was arbitrary and $\cG$ is compact, it follows that $y\in \cG$.
\end{proof}

\begin{lemma}
\label{lem:shadow-recurrent-in-attractor}
Every shadow chain node of $F$ lies in $\cG$.
In particular,
\[
\cR_\cS\subset\cG.
\]
\end{lemma}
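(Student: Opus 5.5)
Our plan is to show that every $x\in\cR_\cS$ lies in $\cG$; since nodes are subsets of $\cR_\cS$, the first claim follows from this. Take $x\in\cR_\cS$, so $x\tto x$ by Lemma~\ref{lemma: x >= x}. Fix $\rho>0$ small, with $\rho$ admissible in Lemma~\ref{prop: G is a trapping region for R_S}, and argue by contradiction assuming $d(x,\cG)>0$. The idea is that arbitrarily long accurate loops through $x$ must spend a long stretch tracking the true orbit of points near the compact orbit segment of $x$. Attraction of compact sets then drags such a chain into a small neighborhood of $\cG$. From there the trapping lemma forbids returning to $x$.

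\textbf{Step 1 (time to reach $N_a(\cG)$).} Using Lemma~\ref{lemma: Lyap} and the strong-attractor hypothesis, pick $\varepsilon_0$ as in the proof of Lemma~\ref{prop: G is a trapping region for R_S}. We will need a version of that lemma in which the chain starts in a small neighbourhood $N_a(\cG)$ rather than in $\cG$ itself. This is exactly what Step~1 and Step~2 of that proof deliver: there is $\delta_1>0$ such that every $\delta_1$-chain starting in $N_a(\cG)$ stays in $N_\rho(\cG)$. Choosing $\rho<d(x,\cG)$, such a chain can never end at $x$. The compact set $\{x\}$ is attracted by $\cG$, so there is $S\ge1$ with $F^S(x)\in N_{a/2}(\cG)$.

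\textbf{Step 2 (finite-time tracking from $x$).} Next we show that for $\delta$ small, any $\delta$-chain $\gamma$ from $x$ of length at least $S$ satisfies $d(\gamma(S),F^S(x))<a/2$, and hence $\gamma(S)\in N_a(\cG)$. The argument is the iterated one-step estimate of Step~1 in Lemma~\ref{prop: G is a trapping region for R_S}, run on the unit intervals $[k,k+1]$ for $k<S$. The needed uniform continuity here is only continuity of $F$ on $[0,1]$ at the finitely many points $F^k(x)$, each a compactness argument as in Lemma~\ref{lemma: Concatenation}, so no s-uniform continuity is required at this stage. Choose $\delta\le\delta_1$ accordingly.

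\textbf{Step 3 (conclusion).} By Corollary~\ref{cor: long loops} there is a $\delta$-loop $\gamma$ through $x$ of length greater than $S+1$. The shifted curve $s\mapsto\gamma(S+s)$ is a $\delta$-chain, of length at least $1$, starting at $\gamma(S)\in N_a(\cG)$ and ending at $x$. This contradicts Step~1. Hence $d(x,\cG)=0$, and since $\cG$ is closed, $x\in\cG$.

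The main obstacle is Step~1: one must verify that the proof of Lemma~\ref{prop: G is a trapping region for R_S} really only uses $\gamma(0)\in N_a(\cG)$, and that the iteration constants do not depend on the starting point. They do not, because Step~1 of that proof is stated for $\gamma(0)\in N_a(\cG)$ and uses s-uniform continuity on $W_{\varepsilon_0}(\cG)$, which contains every point involved.
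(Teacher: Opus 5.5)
Your proof is correct, but it takes a different route from the paper's. The paper's argument is three lines. By Lemma~\ref{lemma: OC1} the whole forward orbit of $x$ is shadow chain-equivalent, so $F^{t_n}(x)\tto x$ for $t_n\to\infty$. Compactness of $\cG$ gives a subsequence with $F^{t_n}(x)\to g\in\cG$, and closedness of $\cS$ (Proposition~\ref{prop: closed and transitive}) then yields $g\tto x$. Finally, Lemma~\ref{prop: G is a trapping region for R_S}, used exactly as stated (chains starting \emph{in} $\cG$), forces $x\in\cG$.

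You instead work with a single long $\delta$-loop. You use finite-time tracking, which needs only continuity of $F$ on $[0,1]$ at the finitely many points $F^k(x)$, to show the loop enters $N_a(\cG)$ at time $S$. You then invoke the trapping property for chains starting in $N_a(\cG)$. That neighbourhood version is not part of the lemma's statement, but you are right that Steps~1--2 of its proof establish exactly it, since the iteration only ever uses $\gamma(km)\in N_a(\cG)$.

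In effect, your Steps~2--3 inline the tracking-and-shift argument from the proof of Lemma~\ref{lemma: OC1}, and the neighbourhood trapping replaces the closedness-and-limit step. The paper's version is more modular and relies only on stated results. Yours avoids closedness of $\cS$ and is quantitative: for $x\notin\cG$ it produces a $\delta>0$ for which no $\delta$-loop through $x$ has length greater than $S+1$. The price is that it depends on the internals of the trapping lemma's proof rather than its statement.
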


\begin{proof}
Let $x\in \cR_\cS$. 
By Lemma~\ref{lemma: OC1}, the orbit of $x$ is shadow chain-equivalent.
Since \(\cG\) attracts the compact set \(\{x\}\), we have
\[
d(F^t(x),\cG)\to0
\qquad\text{as }t\to\infty .
\]
Choose a \(t_n\to\infty\). 
Since \(\cG\) is compact, possibly after passing to a subsequence, there exists \(g\in \cG\) such that
\[
F^{t_n}(x)\to g .
\]
For every \(n\), the previous paragraph gives
\[
F^{t_n}(x)\tto x .
\]
Since \(\tto\) is closed, we obtain
\[
g\tto x .
\]
But \(g\in \cG\), so Lemma~\ref{prop: G is a trapping region for R_S} implies \(x\in \cG\).
Therefore \(\cR_\cS\subset \cG\). Since shadow nodes are equivalence classes inside
\(\cR_\cS\), every shadow node is contained in \(\cG\).
\end{proof}

\begin{lemma}
\label{lem:shadow-relation-restricts-to-attractor}
Denote by \(F_{\cG}\) the restriction of \(F\) to \(\cG\) and write $x\ttog y$ for the shadow chain relation of $F_\cG$. 
If \(x,y\in \cG\),
then
\[
x\tto y
\qquad\Longleftrightarrow\qquad
x\ttog y.
\]
\end{lemma}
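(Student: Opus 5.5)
The implication $x\ttog y\Rightarrow x\tto y$ is immediate. An $\eps$-chain $\gamma:[0,T]\to\cG$ for $F_\cG$ is a piecewise continuous curve in $X$ satisfying \eqref{eq: S}, because $F_\cG$ is just $F$ restricted to the invariant set $\cG$. It is therefore an $\eps$-chain for $F$.

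The converse is the real content. Assume $x,y\in\cG$ and $x\tto y$, and fix $\eps>0$. The plan is to take a very accurate shadow chain of $F$ from $x$ to $y$ and project it pointwise onto $\cG$.

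\textbf{Step 1: choosing $\delta$ and $\rho$.} By compactness of $\cG$ and continuity of $F$, the map $F$ is uniformly continuous on $[0,1]\times\cG$. Even better, using s-uniform continuity on $W_{\eps_0}(\cG)$ from Definition~\ref{def: strong compact dynamics}, there is $\rho\in(0,\eps/3)$ such that
$$
u\in N_\rho(\cG),\ g\in\cG,\ d(u,g)<\rho\ \Longrightarrow\ d(F^\tau(u),F^\tau(g))<\eps/3\quad\text{for all }\tau\in[0,1].
$$
Only a compactness argument around $\cG$ is needed here: for each $g\in\cG$, continuity of $F$ on $[0,1]\times\{g\}$ gives a radius, and a finite subcover handles all of $\cG$. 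Then take $\delta=\min\{\rho,\eps/3\}$, shrunk further so that Lemma~\ref{prop: G is a trapping region for R_S} applies with this $\rho$. Let $\gamma:[0,T]\to X$ be a $\delta$-chain from $x$ to $y$. Since $\gamma(0)=x\in\cG$, Lemma~\ref{prop: G is a trapping region for R_S} gives $\gamma([0,T])\subset N_\rho(\cG)$.

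\textbf{Step 2: projecting onto $\cG$.} For each $t\in(0,T)$ pick $\hat\gamma(t)\in\cG$ with $d(\gamma(t),\hat\gamma(t))<\rho$, and set $\hat\gamma(0)=x$, $\hat\gamma(T)=y$. Then
$$
d(\hat\gamma(t+\tau),F^\tau(\hat\gamma(t)))\le d(\hat\gamma(t+\tau),\gamma(t+\tau))+d(\gamma(t+\tau),F^\tau(\gamma(t)))+d(F^\tau(\gamma(t)),F^\tau(\hat\gamma(t))),
$$
and the three terms are bounded by $\rho+\delta+\eps/3<\eps$. So $\hat\gamma$ is an $\eps$-chain of $F_\cG$ from $x$ to $y$.

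\textbf{Main obstacle: regularity of the projection.} The definition of shadow chain requires piecewise continuity, and an arbitrary pointwise selection $\hat\gamma$ need not be piecewise continuous. I would first try to avoid this by discretizing. Choose a fine partition $0=s_0<s_1<\dots<s_k=T$ on which $\gamma$ is continuous on each piece, with mesh small. Then define $\hat\gamma$ on $[s_i,s_{i+1})$ as the exact orbit segment $F^{r-s_i}(g_i)$, where $g_i\in\cG$ is $\rho$-close to $\gamma(s_i)$; this is continuous on each piece, and the final point is taken to be $y$. Verifying the tracking estimate for $\hat\gamma$ requires comparing $F^{r-s_i}(g_i)$ with $\gamma(r)$ on short pieces. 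This uses the uniform continuity of $F$ near $\cG$ on bounded time intervals together with the iterated estimate from Step 1 of Lemma~\ref{prop: G is a trapping region for R_S}, which shows $d(\gamma(s_i+r),F^r(\gamma(s_i)))$ is small for $r\le 1$. That comparison is the bookkeeping step where care, with constants split into smaller fractions of $\eps$, will be needed.
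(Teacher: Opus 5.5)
Your argument is correct. The curve you end with, made of orbit segments $r\mapsto F^{r-s_i}(g_i)$ issued from points $g_i\in\cG$ near $\gamma(s_i)$ with the final value reset to $y$, is the same as the paper's $\Gamma$; what differs is how you verify the tracking estimate.

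The paper takes steps $t_{i+1}-t_i\in[1,2]$ and bounds the jump errors $d(F^{h_i}(z_i),z_{i+1})<\eta$ by a four-term estimate. That estimate needs s-uniform continuity on $W_{\varepsilon_0}(\cG)$, because points such as $F^1(\gamma(t_i))$ and $\gamma(t_i+1)$ are not in $\cG$. It then applies the one-jump estimate of Lemma~\ref{lemma: Concatenation}, which relies on steps of length at least $1$ so that each window $[t,t+\tau]$ crosses at most one junction. In effect the paper passes through an $(\eta,1)$-Conley chain in $\cG$ and reruns the argument of Lemma~\ref{lem:CtoP}. You instead show that $\hat\gamma$ is uniformly close to $\gamma$ and transfer the estimate through your three-term inequality of Step 2, which does not care how many junctions a window crosses.

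Your ``bookkeeping'' step closes as follows. Take $\rho<\eps/3$ as in Step 1, and choose $\rho_1>0$ (by the same finite-subcover argument) such that $g\in\cG$ and $d(u,g)<\rho_1$ imply $d(F^\tau(u),F^\tau(g))<\rho/2$ for $\tau\in[0,1]$. Take $\delta\le\rho/2$ small enough that Lemma~\ref{prop: G is a trapping region for R_S} gives $\gamma([0,T])\subset N_{\rho_1}(\cG)$. For any partition of mesh at most $1$, with $g_0=x$ and $r\in[s_i,s_{i+1})$,
\[
d(\hat\gamma(r),\gamma(r))\le d\bigl(F^{r-s_i}(g_i),F^{r-s_i}(\gamma(s_i))\bigr)+d\bigl(F^{r-s_i}(\gamma(s_i)),\gamma(r)\bigr)<\rho/2+\delta\le\rho ,
\]
while $\hat\gamma(T)=y=\gamma(T)$. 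Since $\hat\gamma$ takes values in $\cG$ by invariance, Step 2 applies verbatim. Only the single-step shadow estimate is used here. So you do not need the iteration from the trapping lemma, any lower bound on the mesh, or continuity of $\gamma$ on the pieces.

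Your route is more economical. Apart from the trapping lemma, it uses only continuity moduli at points of the compact set $\cG$. It also isolates a reusable principle: a piecewise continuous curve in $\cG$ that is uniformly close to a sufficiently accurate shadow chain is itself a shadow chain. The paper's route instead exhibits an explicit Conley chain in $\cG$, which fits the Conley/shadow comparison carried out later.
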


\begin{proof}
The implication
\[
x\ttog y \Longrightarrow x\tto y
\]
is immediate, since every shadow chain for \(F_{\cG}\) is also a shadow chain
for \(F\).
Conversely, assume that \(x,y\in \cG\) with \(x\tto y\) and let
\(\varepsilon>0\). 
We show that there is an \(\varepsilon\)-chain in
\(\cG\) from \(x\) to \(y\).

Since \(\cG\) is strong, by Lemma~\ref{prop: G is a trapping region for R_S} we have that, for every \(\alpha>0\), every sufficiently accurate shadow chain starting in \(\cG\) remains in \(N_\alpha(\cG)\). 
Now, let $\varepsilon_0>0$ be such that $\cG$ attracts $N_{\varepsilon_0}(\cG)$ and
$F$ is $s$-uniformly continuous on $W_{\varepsilon_0}(\cG)$ and choose $\eta>0$ such that, whenever $u,v\in W_{\varepsilon_0}(\cG)$ and
$d(u,v)<\eta$, one has
\[
d(F^\tau(u),F^\tau(v))<\varepsilon
\qquad\text{for all }\tau\in[0,1];
\]
then choose $\nu>0$, with $\nu<\eta/4$, such that
\[
d(u,v)<\nu
\quad\Longrightarrow\quad
d(F^\tau(u),F^\tau(v))<\eta/4
\qquad\text{for all }\tau\in[0,1];
\]
finally, choose $\alpha>0$, with $\alpha<\min\{\nu,\eps_0\}$, such that
\[
d(u,v)<\alpha
\quad\Longrightarrow\quad
d(F^\tau(u),F^\tau(v))<\nu
\qquad\text{for all }\tau\in[0,1].
\]

Now, let 
$
\gamma:[0,T]\to X
$
be a shadow $\delta$-chain from $x$ to $y$ with $\delta\in(0,\min\{\nu,\eta/4\})$ small enough that 
\[
\gamma([0,T])\subset N_\alpha(\cG)
\]
and let
\[
0=t_0<t_1<\cdots<t_m=T
\]
be chosen so that
\[
1\leq t_{i+1}-t_i\leq2
\qquad\text{for all }i.
\]
For each $i$, choose $z_i\in \cG$ such that
\[
d(z_i,\gamma(t_i))<\alpha,
\]
with $z_0=x$ and $z_m=y$.
Set $h_i=t_{i+1}-t_i=1+\theta_i$, where $\theta_i\in[0,1]$. 
Then
\[
\begin{aligned}
d(F^{h_i}(z_i),z_{i+1})
&\leq
\phantom{+}d(F^{\theta_i}(F^1(z_i)),F^{\theta_i}(F^1(\gamma(t_i)))) \\
&\quad+
d(F^{\theta_i}(F^1(\gamma(t_i))),F^{\theta_i}(\gamma(t_i+1))) \\
&\quad+
d(F^{\theta_i}(\gamma(t_i+1)),\gamma(t_{i+1})) \\
&\quad+
d(\gamma(t_{i+1}),z_{i+1}).
\end{aligned}
\]
Since $\gamma$ is a $\delta$-chain and $F$ is s-uniformly continuous on $W_{\varepsilon_0}(\cG)$, each of those summands is smaller than $\eta/4$ and so
$$
d(F^{h_i}(z_i),z_{i+1})<\eta.
$$

Now define the concatenated curve $\Gamma:[0,T]\to \cG$ by
\[
        \Gamma(t)=F^{t-t_i}(z_i)
        \qquad\text{for }t\in[t_i,t_{i+1}),
\]
and set $\Gamma(T)=y=z_m$.

We claim that $\Gamma$ is an $\varepsilon$-chain in $\cG$ from
$x=z_0$ to $y=z_m$. 
Indeed, away from the partition times $t_i$, the shadow chain estimate is exact. 
If an interval $[t,t+\tau]$, with $\tau\in[0,1]$, crosses a partition time $t_{i+1}$, then the same
one-jump estimate used in the proof of Lemma~\ref{lemma: Concatenation} applies, with
\[
        d(F^{h_i}(z_i),z_{i+1})<\eta
\]
playing the role of the small error at the joining point. 
Since $\eta$ was chosen so that $\eta$-close points in $W_{\varepsilon_0}(\cG)$ remain
$\varepsilon$-close under $F^\tau$ for all $\tau\in[0,1]$, the crossing
estimate is also smaller than $\varepsilon$.

Thus, $\Gamma$ is an $\varepsilon$-chain in $\cG$ from $x$ to $y$.
Since $\varepsilon>0$ is arbitrary,
$
        x\ttog y .
$
\end{proof}

\begin{theorem}
\label{thm: R_S subset G}
Let \(F\) have strong compact dynamics, let \(\cG\) be its strong global
attractor, and let \(F_{\cG}=F|_{\cG}\). 
Denote by $\cR_{\cS_\cG}$ and $\Gamma_{\cS_\cG}$, respectively, the chain-recurrent set and the graph of $F_{\cG}$.
Then
\[
\cR_{\cS_\cG}=\cR_\cS,
\]
the shadow nodes of \(F_{\cG}\) and \(F\) are the same, and
\[
\Gamma_{\cS_\cG}=\Gamma_\cS.
\]
\end{theorem}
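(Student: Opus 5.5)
The argument rests on two facts. By Lemma~\ref{lem:shadow-recurrent-in-attractor}, $\cR_\cS\subset\cG$. By Lemma~\ref{lem:shadow-relation-restricts-to-attractor}, on pairs of points of $\cG$ the relations $\tto$ and $\ttog$ coincide. Once these are in place, each of the three claims reduces to checking that the defining conditions only involve points of $\cG$.

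\emph{Recurrent sets.} First, take $x\in\cR_{\cS_\cG}$. By definition there is a $t>0$ with $x\ttog F^t(x)$ and $F^t(x)\ttog x$. Both $x$ and $F^t(x)$ lie in $\cG$, and any $\eps$-chain for $F_\cG$ is also an $\eps$-chain for $F$. Hence the same relations hold for $\tto$, and $x\in\cR_\cS$.

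Conversely, take $x\in\cR_\cS$. Lemma~\ref{lem:shadow-recurrent-in-attractor} gives $x\in\cG$. Since $\cG$ is invariant, $F^t(x)\in\cG$ for every $t$, and $F_\cG^t(x)=F^t(x)$. Lemma~\ref{lem:shadow-relation-restricts-to-attractor} then turns $x\tto F^t(x)$ and $F^t(x)\tto x$ into the corresponding $\ttog$ relations. Thus $x\in\cR_{\cS_\cG}$.

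Note that the restriction $F_\cG$ is itself a semiflow on the compact metric space $\cG$. By Proposition~\ref{prop: closed and transitive}, $\cS_\cG$ is closed and transitive, so Definition~\ref{def: nodes and graph} applies to it.

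\emph{Nodes.} Now both recurrent sets equal the same subset $\cR_\cS\subset\cG$. On this subset, $\cS$-equivalence and $\cS_\cG$-equivalence are the same relation, again by Lemma~\ref{lem:shadow-relation-restricts-to-attractor} applied in both directions. Maximal equivalent subsets of a common set under a common relation coincide, so the nodes are the same.

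A subtlety is that a node is defined as a maximal $R$-equivalent set, and one could worry that this maximality is taken in $X$ rather than in $\cR_R$. For $F$ this causes no trouble. If $y$ is $\cS$-equivalent to some $x\in\cR_\cS$, then $x\tto y\tto x$ and the trapping statement of Lemma~\ref{prop: G is a trapping region for R_S} give $y\in\cG$. Moreover $y\tto x\tto F^t(x)\tto x\tto y$ for a suitable $t>0$. From this I would derive $y\tto y$, and then $y\in\cR_\cS$ by Lemma~\ref{lemma: x >= x}. So the maximality computed in $X$ stays inside $\cR_\cS$, and hence inside $\cG$.

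\emph{Graphs.} Edges between nodes $M,N$ are given by some pair $x\in M$, $y\in N$ with $x\tto y$ (respectively $x\ttog y$). Since $M,N\subset\cG$, Lemma~\ref{lem:shadow-relation-restricts-to-attractor} shows that these two conditions are equivalent. Therefore $\Gamma_{\cS_\cG}=\Gamma_\cS$.

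The main obstacle is already absorbed into Lemmas~\ref{lem:shadow-recurrent-in-attractor} and~\ref{lem:shadow-relation-restricts-to-attractor}. What remains delicate is only the bookkeeping of the maximality in the node definition, which is handled by the trapping property as described above.
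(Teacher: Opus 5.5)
Your proposal is correct and is essentially the paper's proof. Both reduce all three claims to $\cR_\cS\subset\cG$ (Lemma~\ref{lem:shadow-recurrent-in-attractor}) together with the coincidence of $\tto$ and $\ttog$ on pairs of points of $\cG$ (Lemma~\ref{lem:shadow-relation-restricts-to-attractor}); your extra remark on where node maximality is taken is sound though not needed by the paper, and $y\tto y$ follows directly from $y\tto x\tto y$ by transitivity, so the detour through $F^t(x)$ is superfluous.
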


\begin{proof}
By Lemma~\ref{lem:shadow-recurrent-in-attractor},
$
\cR_\cS\subset \cG.
$
Since \(F_{\cG}\) is the restriction of \(F\) to \(\cG\),
Lemma~\ref{lem:shadow-relation-restricts-to-attractor} implies that, for
points of \(\cG\), the shadow relation for \(F\) is exactly the shadow relation
for \(F_{\cG}\). Therefore, a point of \(\cG\) is shadow recurrent for \(F\) if
and only if it is shadow recurrent for \(F_{\cG}\). Hence
$
\cR_{\cS_\cG}=\cR_\cS.
$

The same equivalence of relations on \(\cG\) also implies that two points of \(\cR_\cS\) are shadow-chain equivalent for \(F\) if and only if they are shadow chain equivalent for \(F_{\cG}\). 
Thus, the shadow nodes of \(F\) and
\(F_{\cG}\) coincide.

Finally, let \(M,N\) be two shadow nodes. Since \(M,N\subset \cG\), again by
Lemma~\ref{lem:shadow-relation-restricts-to-attractor},
\[
M\tto N
\qquad\Longleftrightarrow\qquad
M\ttog N.
\]
Thus the edges are the same, and therefore
$
\Gamma_{\cS_\cG}=\Gamma_\cS.
$
\end{proof}

By Theorem~\ref{thm: R_S subset G}, the shadow chain-recurrent structure (recurrent set, nodes, edges) of $F$ agrees with that of the restricted semiflow $F|_{\cG}$.
We already knew that that is the case also for the Conley chain-recurrent structure from Theorem~\ref{thm: CR G}.
Therefore, in order to prove the coincidence of the nodes  and edges of $\cS$ with the ones of $\cC$, it is enough to prove the claim for the restriction of $F$ on $\cG$.
From now on, we use the notation $x\otog y$ (resp. $x\ttog y$) to mean that $y$ is Conley chain-downstream (resp. shadow chain-downstream) from $x$ for $F|_\cG$.




\medskip\noindent
{\BF$\cR_\cC=\cR_\cS$.}
We now start working at our main result by showing that $\cR_\cC=\cR_\cS$.

\medskip\noindent
{\BF 1. $\cR_\cC\subset\cR_\cS$.}

%
\begin{lemma}
\label{lem:CtoP}
Let $x,y\in \cG$. Then $x\otog y \Rightarrow x\ttog y$.
\end{lemma}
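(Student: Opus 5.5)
Given $\eps>0$, we want an $\eps$-chain inside $\cG$ from $x$ to $y$, built from a suitable Conley chain of $F_\cG$. The idea is to follow the exact orbit from each chain point and jump to the next point at the transition time. Since $\cG$ is compact, $F$ is uniformly continuous on $[0,1]\times\cG$. So we can choose $\delta\in(0,\eps/2)$ such that $u,v\in\cG$ with $d(u,v)<\delta$ implies $d(F^\tau(u),F^\tau(v))<\eps/2$ for all $\tau\in[0,1]$.

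Since $x\otog y$, pick a $(\delta,1)$-chain $x=c_0,c_1,\dots,c_N=y$ in $\cG$ with times $t_i\ge1$, so $d(F^{t_i}(c_i),c_{i+1})<\delta$. Set $s_0=0$, $s_{k+1}=s_k+t_k$ and $T=s_N\ge1$. Define $\Gamma:[0,T]\to\cG$ by
\[
\Gamma(t)=F^{t-s_k}(c_k)\quad\text{for } t\in[s_k,s_{k+1}),\qquad \Gamma(T)=y.
\]
This curve is piecewise continuous, and its pieces have length $t_k\ge1$. Because each piece has length at least $1$, an interval $[t,t+\tau]$ with $\tau\le1$ contains at most one break point in its interior. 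Break points at the endpoints are harmless: if $t+\tau=s_{k+1}$, compare $c_{k+1}$ with $F^{t_k}(c_k)$, and the distance is less than $\delta$.

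The estimate \eqref{eq: S} must be checked in three cases.
\begin{itemize}
\item If $[t,t+\tau]$ lies inside one piece, the estimate is exact (error $0$).
\item If $t+\tau=s_{k+1}$ exactly, the error is $d(c_{k+1},F^{t_k}(c_k))<\delta$.
\item If $s_{k+1}\in(t,t+\tau)$, write $a=s_{k+1}-t$ and $b=t+\tau-s_{k+1}$, so $a+b=\tau\le1$. Then $F^\tau(\Gamma(t))=F^b(F^{t_k}(c_k))$ and $\Gamma(t+\tau)=F^b(c_{k+1})$. The choice of $\delta$, applied to $F^{t_k}(c_k),c_{k+1}\in\cG$ with $\tau=b$, gives distance less than $\eps/2$.
\end{itemize}
This is the same one-jump estimate as in Lemma~\ref{lemma: Concatenation}. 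It is simpler here because each piece is an exact orbit segment.

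So $\Gamma$ is an $\eps$-chain of $F_\cG$ from $x$ to $y$, and since $\eps$ is arbitrary, $x\ttog y$.

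The main point needing care is the uniform choice of $\delta$. A chain point $c_{k+1}$ may be any point of $\cG$, so pointwise continuity at one base point, as in Lemma~\ref{lemma: Concatenation}, is not enough. Compactness of $[0,1]\times\cG$, together with working in $F_\cG$ so that all points stay in $\cG$, gives the needed uniform continuity. The other thing to watch is taking $T\ge1$ in the Conley chain so that every piece has length at least $1$.
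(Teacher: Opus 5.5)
Your proof is correct and follows the paper's argument essentially verbatim. You use uniform continuity of $F$ on $[0,1]\times\cG$ to choose $\delta$, turn a $(\delta,1)$-chain of $F_\cG$ into a concatenation of exact orbit segments in $\cG$, and verify the one-jump estimate at the break points. Your explicit case check, which uses that every piece has length at least $1$, spells out what the paper delegates to the estimate of Lemma~\ref{lemma: Concatenation}.
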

\begin{proof}
Fix $\varepsilon>0$. Since $\cG$ is compact, there is a $\delta>0$ such that
$\delta<\varepsilon$ and, for $u,v\in\cG$,
\[
        d(u,v)<\delta
        \quad\Longrightarrow\quad
        d(F^\tau(u),F^\tau(v))<\varepsilon
        \qquad\text{for all }\tau\in[0,1].
\]
Since $x\otog y$, there is a $(\delta,1)$-chain
\[
        x=x_0,x_1,\ldots,x_n=y
\]
in $\cG$ with times $t_i\ge1$ such that
\[
        d(F^{t_i}(x_i),x_{i+1})<\delta,
        \qquad i=0,\ldots,n-1 .
\]
Let $T_{-1}=0$ and $T_i=\sum_{j=0}^i t_j$ and define
\[
        \gamma(t)=F^{t-T_{i-1}}(x_i)
        \qquad\text{for }t\in[T_{i-1},T_i),
\]
and finally set $\gamma(T_{n-1})=y$.

We claim that $\gamma$ is an $\varepsilon$-chain from $x$ to $y$.
Indeed, away from the jump times, the shadow chain estimate is exact.
At a jump time, the same one-jump estimate used in the proof of
Lemma~\ref{lemma: Concatenation} applies, with the jump
\[
        d(F^{t_i}(x_i),x_{i+1})<\delta
\]
playing the role of the small error at the joining point. Since
$\delta$ was chosen so that such errors remain smaller than $\varepsilon$ under
$F^\tau$ for every $\tau\in[0,1]$, the crossing estimate is also
smaller than $\varepsilon$. 
Since $\eps$ was arbitrary, this shows that $x\ttog y$.
\end{proof}

\begin{proposition}
    \label{prop: R_C subset R_S}
    Let $F$ have strong compact dynamics.
    Then $\cR_\cC\subset\cR_\cS$.
\end{proposition}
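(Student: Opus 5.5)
We reduce to the global attractor and then apply Lemma~\ref{lem:CtoP}. Let $x\in\cR_\cC$. By Theorem~\ref{thm: CR G}(1) we have $\cR_\cC=\cR_{\cC_\cG}$, so $x\in\cG$ and $x$ is Conley chain-recurrent for the restricted semiflow $F_\cG$. Since $\cG$ is compact and invariant, $F_\cG$ is a semiflow on the compact metric space $\cG$. Hence all the lemmas of Section~\ref{sec: defs} apply to $F_\cG$ and to its relations $\cC_\cG$ and $\cS_\cG$, in particular Lemma~\ref{lemma: x >= x}. That lemma, applied to $F_\cG$, gives $x\otog x$.

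Next we invoke Lemma~\ref{lem:CtoP} with $y=x\in\cG$. It turns $x\otog x$ into $x\ttog x$. Applying Lemma~\ref{lemma: x >= x} to $F_\cG$ once more, now in its shadow version, yields $x\in\cR_{\cS_\cG}$.

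Finally, Theorem~\ref{thm: R_S subset G} gives $\cR_{\cS_\cG}=\cR_\cS$, so $x\in\cR_\cS$. Alternatively, one can bypass that theorem and use only the trivial implication $x\ttog x\Rightarrow x\tto x$, since a shadow chain in $\cG$ is a shadow chain in $X$, together with Lemma~\ref{lemma: x >= x} for $F$.

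The only real subtlety is the first step: we need the Conley-recurrent points of $F$ to be Conley-recurrent for $F_\cG$, with chains lying inside $\cG$. Lemma~\ref{lem:CtoP} requires Conley chains of $F_\cG$, not merely of $F$, because its uniform continuity estimate is taken on $\cG$. This is exactly where strong compact dynamics enters, through Theorem~\ref{thm: CR G}. Everything after that step is bookkeeping with Lemma~\ref{lemma: x >= x}, which was proved for an arbitrary semiflow and therefore holds for $F_\cG$.
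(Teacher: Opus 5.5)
Your proof is correct and follows essentially the paper's route. You reduce to $F_\cG$ via Theorem~\ref{thm: CR G}, convert Conley relations in $\cG$ to shadow relations with Lemma~\ref{lem:CtoP}, and lift back to $X$ because shadow chains in $\cG$ are shadow chains in $X$. The only difference is bookkeeping: the paper applies Lemma~\ref{lem:CtoP} directly to $x\otog F^t(x)$ and $F^t(x)\otog x$ to get shadow equivalence with $F^t(x)$. You instead pass through the self-relation $x\otog x$ and invoke Lemma~\ref{lemma: x >= x} twice, which is equally valid.
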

\begin{proof}
Let $x\in \cR_\cC$. 
By Theorem~\ref{thm: CR G}, $x\in\cG$ and there exists $t>0$ such that $x\otog F^t(x)$ and $F^t(x)\otog x$.
By Lemma~\ref{lem:CtoP}, $x\ttog F^t(x)$ and $F^t(x)\ttog x$, namely $x$ is shadow chain-equivalent to $F^t(x)$ in $\cG$.
Hence, in particular, $x$ is shadow chain-equivalent to $F^t(x)$ in $X$ and so $x\in \cR_\cS$. 
Thus, $\cR_\cC\subset \cR_\cS$.
\end{proof}

\medskip\noindent
{\BF 2. $\cR_\cS\subset\cR_\cC$.}

\medskip
The following results about Conley chains in a compact setting are crucial for the remainder of the article. 
\begin{thmX}[Ayala {\em et al.}, 2006~\cite{Aya06}, Proposition~5.5]
    \label{thm: Ayala}
    Let $y\in \cR_\cC$, $x\in X$, and $\tau>0$ and assume that $X$ is compact and that, for every $\varepsilon>0$, there exists an $(\varepsilon,\tau)$-chain from $x$ to $y$. 
    Then $x\oto y$.
\end{thmX}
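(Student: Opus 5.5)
The plan is to turn the hypothesis ``for every $\varepsilon>0$ there is an $(\varepsilon,\tau)$-chain from $x$ to $y$'' (short transition times $\tau$) into one with arbitrarily long transition times. The recurrence of $y$ supplies the extra time. Fix $\varepsilon>0$ and $T>0$; we must produce an $(\varepsilon,T)$-chain from $x$ to $y$.

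\textbf{Step 1: long loops at $y$.} Since $y\in\cR_\cC$, Lemma~\ref{lemma: x >= x} gives $y\oto y$. So for every $\delta>0$ there is a $(\delta,T)$-chain from $y$ to $y$, whose transition times are all at least $T$.

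\textbf{Step 2: continuity near $y$.} Because $X$ is compact, $F$ is uniformly continuous on $[0,2T]\times X$. Choose $\delta\in(0,\varepsilon/2)$ so that $d(u,v)<\delta$ implies $d(F^s(u),F^s(v))<\varepsilon/2$ for all $s\in[0,2T]$. Refining times as in the proof of Proposition~\ref{prop: closed and transitive}, we may assume the loop at $y$ has all its times in $[T,2T]$.

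\textbf{Step 3: splice the chain and the loop.} Take a $(\delta,\tau)$-chain $x=x_0,\dots,x_N=y$ with times $t_i\ge\tau$. If every $t_i\ge T$, this is already the desired chain. Otherwise I will absorb the short legs into longer ones. Since each $t_i\ge\tau>0$, consecutive legs can be merged into one leg of time at least $T$, provided the accumulated error stays controlled. Merging $k$ legs with $k\le \lceil T/\tau\rceil+1$ compounds errors through $F^{s}$ with $s$ bounded by roughly $2T$ plus one leg length.

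The obstacle is that the $t_i$ may be unbounded. I will handle this with the same refinement trick: every leg longer than $2T$ is first cut into pieces with times in $[T,2T]$, inserting exact orbit points. After that, all times lie in $[\tau,2T]$. Uniform continuity on $[0,3T]\times X$, iterated $K=\lceil T/\tau\rceil+1$ times, then gives $\delta_K\ll\delta_{K-1}\ll\cdots$. Starting from a $(\delta_K,\tau)$-chain, grouping consecutive short legs into blocks of total time in $[T,3T]$ yields block errors below $\varepsilon/2$. The estimate is the finite-step tracking argument of Lemma~\ref{prop: G is a trapping region for R_S}, Step 1.

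\textbf{Step 4: the final block.} The final block may have total time less than $T$. To fix it, I will append the $(\delta,T)$-loop at $y$ from Step 1 and merge the short tail into the loop's first leg. That leg ends within $\varepsilon/2$ of the correct point because the tail error is below $\delta$ and the legs are controlled by the Step 2 continuity. The remaining loop legs are untouched and keep error below $\delta<\varepsilon$. This yields an $(\varepsilon,T)$-chain from $x$ to $y$, so $x\oto y$.

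I expect the main obstacle to be the careful choice of the nested tolerances in Step 3. The number of merged legs must be bounded independently of the chain, and it is bounded because every time is at least $\tau$. Compactness of $X$ is essential here, since it supplies the uniform continuity needed at arbitrary intermediate points.
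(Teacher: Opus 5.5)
The paper does not prove this statement. It imports it from Ayala et al., remarking only that the cited argument uses positive times alone (so it transfers to semiflows) and that the $(\varepsilon,T)$-chains can even be taken with all jump times equal to $T$. Your argument is a correct, self-contained proof by a direct route.

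With $\tau<T$ (otherwise there is nothing to do), cutting legs longer than $2T$ puts all times in $[\tau,2T]$. Greedy grouping then gives blocks of total time in $[T,3T)$ with at most $\lceil T/\tau\rceil$ legs each. A fixed number of applications of the uniform modulus of continuity of $F$ on $[0,2T]\times X$ therefore controls every block and the tail. This is exactly where compactness of $X$ and the lower bound $\tau$ enter.

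The hypothesis $y\in\cR_\cC$ is used only through $y\oto y$. You take one $(\delta,T)$-loop at $y$ whose first leg, of time $r_0\in[T,2T]$, absorbs the tail of time $s<T$. This produces a leg of time $s+r_0\ge T$ with error below $\varepsilon/2+\delta<\varepsilon$. Two bookkeeping points: that merged error is $\varepsilon/2+\delta$, not $\varepsilon/2$; and $\delta_K$ must make the compounded tail error smaller than $\delta$, not merely smaller than $\varepsilon/2$.

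What your route buys is transparency. Validity for semiflows is evident rather than asserted, and the role of recurrence is isolated. The loop is only really needed when the available $(\delta,\tau)$-chains have total time below $T$; otherwise the tail can be absorbed into the last block. That short-chain situation (e.g.\ $y=F^\tau(x)$ with $y$ not recurrent) is precisely where the conclusion fails without the hypothesis.

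What your route does not give is the sharper normal form with all jump times equal to $T$ mentioned in the paper. Your times lie in $[T,3T)$, which suffices for $x\oto y$ and for the use made of the theorem in Proposition~\ref{prop: RS subset RC}.
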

In fact, one may choose the resulting $(\varepsilon,T)$-chains with all jump times equal to $T$.
Notice that, although stated originally for flows, the proof of the result above uses only positive times and therefore applies verbatim to semiflows.
\begin{thmX}[Hurley, 1995~\cite{Hur95}, Section~3]
    \label{thm: Hur}
    Let \(F\) be a continuous-time semiflow on a compact metric space \(X\) and let \(\tau>0\). 
    Then \(x\in \cR_\cC\) if and only if, for every \(\varepsilon>0\), there exists an \((\varepsilon,\tau)\)-chain from \(x\) to itself.
\end{thmX}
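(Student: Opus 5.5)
The forward implication is immediate from the results already proved. If $x\in\cR_\cC$, then Lemma~\ref{lemma: x >= x} gives $x\oto x$. By definition this provides, for every $\varepsilon>0$ and every $T>0$, an $(\varepsilon,T)$-chain from $x$ to itself. Taking $T=\tau$ gives the claim. So the whole content lies in the converse. I would not invoke Theorem~\ref{thm: Ayala} here, since its hypothesis $y\in\cR_\cC$ is exactly what we want to prove. Instead I would prove $x\oto x$ directly and then conclude with Lemma~\ref{lemma: x >= x}.

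For the converse, fix $\varepsilon>0$ and $T>0$; I will build an $(\varepsilon,T)$-chain from $x$ to $x$ out of a very accurate $(\delta,\tau)$-loop. The plan has three steps.

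\emph{Refinement.} As in the proof of Proposition~\ref{prop: closed and transitive}, refine the loop without changing its endpoints or errors, so that every transition time lies in $[\tau,2\tau]$.

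\emph{Repetition.} Concatenate the loop with itself $k$ times, with $k$ chosen so that the total time is at least $T$. This is still a $(\delta,\tau)$-loop at $x$ with all times in $[\tau,2\tau]$.

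\emph{Grouping.} Group consecutive legs greedily into blocks: close a block as soon as its total time reaches $T$. If the final block has total time less than $T$, merge it into the preceding one. Every block then has total time in $[T,2T+2\tau]$ and contains at most $N:=\lceil 2T/\tau\rceil+2$ legs. The key point is that $N$ depends only on $T$ and $\tau$, not on $\delta$.

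Next, collapse each block into a single leg. For a block $x_i,x_{i+1},\dots,x_{i+j}$ with times $s_i,\dots,s_{i+j-1}$, I would show
\[
d\bigl(F^{s_i+\cdots+s_{i+j-1}}(x_i),x_{i+j}\bigr)<\varepsilon
\]
whenever $j\le N$ and $\delta$ is small enough. The points $x_i$, $x_{i+j}$ with time $s_i+\dots+s_{i+j-1}\ge T$ then form one leg of an $(\varepsilon,T)$-chain. The estimate comes by induction on $j$, using that $F$ is uniformly continuous on the compact set $[0,2T+2\tau]\times X$; here compactness of $X$ is essential. Choose $\eta_N=\varepsilon$ and then, recursively, $\eta_{j-1}\le\eta_j/2$ such that $d(u,v)<\eta_{j-1}$ implies $d(F^s(u),F^s(v))<\eta_j/2$ for all $s\in[0,2T+2\tau]$. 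Finally take $\delta<\eta_0$. The inductive step writes
\[
F^{s_i+\cdots+s_{i+j}}(x_i)=F^{s_{i+j}}\bigl(F^{s_i+\cdots+s_{i+j-1}}(x_i)\bigr)
\]
and applies the triangle inequality with the one-leg error $d(F^{s_{i+j}}(x_{i+j}),x_{i+j+1})<\delta$.

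The main obstacle is keeping this error accumulation under control. It works only because the number of legs per block and the time horizon are bounded uniformly in $\delta$, which is exactly what the refinement and grouping steps secure. The resulting collapsed chain is an $(\varepsilon,T)$-chain from $x$ to $x$. Since $\varepsilon$ and $T$ were arbitrary, $x\oto x$, and hence $x\in\cR_\cC$ by Lemma~\ref{lemma: x >= x}.
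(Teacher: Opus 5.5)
The paper does not prove this statement; it imports it from Hurley's Section~3, just as it imports Theorem~\ref{thm: Ayala} from Ayala \emph{et al.} Your argument is a correct, self-contained replacement for that citation.

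The forward direction is indeed immediate from Lemma~\ref{lemma: x >= x}. For the converse, your refine--repeat--regroup scheme works:
\begin{itemize}
\item After refinement every leg has time in $[\tau,2\tau]$.
\item Greedy grouping, with the short final block merged into the preceding one, gives blocks of total time in $[T,2T+2\tau)$. Each block has fewer than $2T/\tau+2$ legs.
\item Your recursive choice of the $\eta_j$ gives, by induction, accumulated error below $\eta_j\le\varepsilon$ after $j\le N$ legs. This holds because $\delta<\eta_0\le\eta_{j+1}/2$ for all $j$.
\end{itemize}
Each propagation step passes through only one refined leg, so uniform continuity of $F$ on $[0,2\tau]\times X$ already suffices; the larger window $[0,2T+2\tau]$ is harmless but unnecessary. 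You are also right not to use Theorem~\ref{thm: Ayala}, since its hypothesis $y\in\cR_\cC$ would make the argument circular.

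Compared with the citation, your route costs a paragraph but gains three things:
\begin{itemize}
\item It shows exactly where compactness enters: a uniform modulus of continuity for $F^s$, $s\in[0,2\tau]$, is needed because the chain points where continuity is invoked depend on $\delta$.
\item It uses only forward times, so it applies verbatim to semiflows with the paper's definition of $(\varepsilon,T)$-chain. There is no need to check that the source's conventions match, which is the kind of caveat the paper adds for Theorem~\ref{thm: Ayala}.
\item It makes the paper's only application of this result (Proposition~\ref{prop: RS subset RC}, with $X=\cG$ and $\tau=1$) independent of outside references.
\end{itemize}
The citation is shorter, but the paper needs exactly this statement, and your argument covers it completely.
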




\begin{lemma}
    \label{lem: eps,1 chain}
    Let $x,y\in\cG$ with $x\ttog y$.
    Then, for every $\varepsilon>0$, there is an $(\varepsilon,1)$-chain in $\cG$ from $x$ to $y$.
\end{lemma}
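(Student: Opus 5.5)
The plan is to reverse the construction of Lemma~\ref{lem:CtoP}: we sample a shadow chain at times spaced by at least one and use the sampled points as a Conley chain. Fix $\varepsilon>0$. Since $\cG$ is compact, $F$ is uniformly continuous on $[0,1]\times\cG$, so there is $\eta\in(0,\varepsilon)$ with the following property: for $u,v\in\cG$, $d(u,v)<\eta$ implies $d(F^\tau(u),F^\tau(v))<\varepsilon/2$ for all $\tau\in[0,1]$. Since $x\ttog y$, we take a shadow $\delta$-chain $\gamma:[0,T]\to\cG$ for $F_\cG$ from $x$ to $y$, with $\delta<\min\{\eta,\varepsilon/2\}$. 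Here $T\ge1$, and by construction all values of $\gamma$ lie in $\cG$.

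Next we choose a partition $0=t_0<t_1<\cdots<t_m=T$ with $1\le t_{i+1}-t_i\le 2$ for all $i$. This is possible since $T\ge1$: if $T<2$ we take $m=1$, and otherwise we use unit steps and absorb the remainder into the last step. Set $c_i=\gamma(t_i)$, so that $c_0=x$ and $c_m=y$, and write $h_i=t_{i+1}-t_i=1+\theta_i$ with $\theta_i\in[0,1]$. The candidate $(\varepsilon,1)$-chain is $c_0,\dots,c_m$ with times $h_i\ge1$. It remains to bound $d(F^{h_i}(c_i),c_{i+1})$, which we split as
\[
d(F^{\theta_i}(F^1(c_i)),F^{\theta_i}(\gamma(t_i+1)))+d(F^{\theta_i}(\gamma(t_i+1)),\gamma(t_i+1+\theta_i)).
\]
Apply the shadow estimate with $\tau=1$ at time $t_i$. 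It gives $d(\gamma(t_i+1),F^1(c_i))<\delta<\eta$, and both points lie in $\cG$, so the first term is $<\varepsilon/2$ by the choice of $\eta$. The shadow estimate with $\tau=\theta_i$ at time $t_i+1$ makes the second term $<\delta<\varepsilon/2$; this is legitimate since $t_i+1+\theta_i=t_{i+1}\le T$. Hence $d(F^{h_i}(c_i),c_{i+1})<\varepsilon$, and the sequence is an $(\varepsilon,1)$-chain in $\cG$.

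The argument is essentially the second half of the estimate in Lemma~\ref{lem:shadow-relation-restricts-to-attractor}, now simpler because everything is already inside the compact set $\cG$. The only delicate point is the bookkeeping at the partition. The step $h_i$ must be split as $1+\theta_i$ so that each piece is at most one time unit, which is where the shadow condition applies, and the intermediate point $\gamma(t_i+1)$ must lie within the domain of $\gamma$. This is why the partition gaps are taken in $[1,2]$. A final step with $h_{m-1}<1$ is not allowed, since it would violate the requirement $t_i\ge1$ of an $(\varepsilon,1)$-chain.
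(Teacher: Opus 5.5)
Your proposal is correct and is essentially the paper's proof. The paper also samples the shadow chain at times spaced in $[1,2]$, writes each step as $1+\theta$, and bounds the error using uniform continuity on $[0,1]\times\cG$ together with the shadow estimate. The only cosmetic difference is the partition: you take unit steps and absorb the remainder into the last step, while the paper takes equal steps $h=L/\lceil L/2\rceil$.
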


\begin{proof}
Fix $\varepsilon>0$. Since $\cG$ is compact, $F$ is uniformly continuous on
$[0,1]\times \cG$. 
Hence there exists a $\delta_0>0$ such that
\[
d(u,v)<\delta_0
\quad\Longrightarrow\quad
d\bigl(F^\tau(u),F^\tau(v)\bigr)<\varepsilon/2
\]
for all $\tau\in[0,1]$ and all $u,v\in \cG$. 
Set
\[
\delta:=\min\{\delta_0,\varepsilon/2\}.
\]
Since $x\ttog y$, there exists a shadow $\delta$-chain
\[
\gamma:[0,L]\to \cG
\]
from $x$ to $y$. 
Let $N\geq1$ be such that
\[
1\leq h:=\frac{L}{N}\leq2.
\]
For instance, one may take $N=\lceil L/2\rceil$. Set
\[
s_i=ih,
\qquad
c_i=\gamma(s_i),
\qquad
i=0,\ldots,N.
\]
Then $c_0=x$ and $c_N=y$. Write
\[
h=1+\theta,
\qquad
\theta\in[0,1].
\]

For each $i=0,\ldots,N-1$, the shadow-chain condition gives
\[
d\bigl(\gamma(s_i+1),F^1(\gamma(s_i))\bigr)<\delta
\]
and
\[
d\bigl(\gamma(s_i+1+\theta),F^\theta(\gamma(s_i+1))\bigr)<\delta.
\]
Therefore,
\[
\begin{aligned}
d\bigl(c_{i+1},F^h(c_i)\bigr)
&=
d\bigl(\gamma(s_i+1+\theta),F^\theta(F^1(\gamma(s_i)))\bigr) \\
&\leq
d\bigl(\gamma(s_i+1+\theta),F^\theta(\gamma(s_i+1))\bigr) \\
&\quad+
d\bigl(F^\theta(\gamma(s_i+1)),
F^\theta(F^1(\gamma(s_i)))\bigr) \\
&<\delta+\varepsilon/2
\leq\varepsilon.
\end{aligned}
\]
Since $h\geq1$, the sequence
\[
c_0,c_1,\ldots,c_N
\]
with all times equal to $h$ is an $(\varepsilon,1)$-chain in $\cG$ from $x$ to
$y$.
\end{proof}

\begin{proposition}
\label{prop: RS subset RC}
Let $F$ have strong compact dynamics.
Then $\cR_\cS\subset \cR_\cC$. 
Moreover, if $x\in\cG$, $y\in \cR_\cC$ and
$x\tto y$, then $x\oto y$.
\end{proposition}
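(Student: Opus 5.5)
We will prove the second assertion first and then obtain $\cR_\cS\subset\cR_\cC$ from it. The idea is to work entirely inside the compact invariant set $\cG$. There, the theorems of Ayala \emph{et al.} (Theorem~\ref{thm: Ayala}) and Hurley (Theorem~\ref{thm: Hur}) are available for the restricted semiflow $F_\cG$ on the compact metric space $\cG$.

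\textbf{Second assertion.} Let $x\in\cG$, $y\in\cR_\cC$ and $x\tto y$.
\begin{enumerate}
\item By Lemma~\ref{prop: G is a trapping region for R_S}, $y\in\cG$; alternatively, Theorem~\ref{thm: CR G} gives $y\in\cR_{\cC_\cG}\subset\cG$.
\item By Lemma~\ref{lem:shadow-relation-restricts-to-attractor}, $x\ttog y$.
\item By Lemma~\ref{lem: eps,1 chain}, for every $\varepsilon>0$ there is an $(\varepsilon,1)$-chain in $\cG$ from $x$ to $y$.
\item By Theorem~\ref{thm: CR G}(1), $y\in\cR_{\cC_\cG}$, so $y$ is chain recurrent for $F_\cG$.
\item Theorem~\ref{thm: Ayala}, applied to $F_\cG$ on the compact space $\cG$ with $\tau=1$, then gives $x\otog y$.
\item Every Conley chain in $\cG$ is a Conley chain in $X$, so $x\oto y$.
\end{enumerate}

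\textbf{Inclusion $\cR_\cS\subset\cR_\cC$.} Let $x\in\cR_\cS$.
\begin{enumerate}
\item By Lemma~\ref{lem:shadow-recurrent-in-attractor}, $x\in\cG$.
\item By Lemma~\ref{lemma: x >= x}, $x\tto x$.
\item By Lemma~\ref{lem:shadow-relation-restricts-to-attractor}, $x\ttog x$.
\item Lemma~\ref{lem: eps,1 chain} with $y=x$ gives, for every $\varepsilon>0$, an $(\varepsilon,1)$-chain in $\cG$ from $x$ to itself.
\item Hurley's Theorem~\ref{thm: Hur}, applied to $F_\cG$ on the compact space $\cG$ with $\tau=1$, gives $x\in\cR_{\cC_\cG}$.
\item By Theorem~\ref{thm: CR G}(1), $\cR_{\cC_\cG}=\cR_\cC$, so $x\in\cR_\cC$.
\end{enumerate}
We use Hurley rather than the second assertion here, because the second assertion presupposes $y\in\cR_\cC$, which is exactly what has to be shown when $y=x$.

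\textbf{Main obstacle.} The whole argument reduces to combining earlier results. The delicate point is checking that Theorems~\ref{thm: Ayala} and~\ref{thm: Hur} apply to the semiflow $F_\cG$. This requires that $\cG$ is compact and invariant, so that $F_\cG$ is a genuine semiflow on a compact metric space. It also requires keeping track of which recurrent set is meant at each step, $\cR_{\cC_\cG}$ versus $\cR_\cC$; this is where Theorem~\ref{thm: CR G} is needed to pass between the two. Finally, the $(\varepsilon,1)$-chains provided by Lemma~\ref{lem: eps,1 chain} must lie in $\cG$, and they do because the lemma is stated for chains of $F_\cG$.
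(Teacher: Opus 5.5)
Your proof is correct and follows essentially the same route as the paper: you pass to $F_\cG$ via Lemma~\ref{lem:shadow-relation-restricts-to-attractor}, turn shadow chains into $(\varepsilon,1)$-Conley chains with Lemma~\ref{lem: eps,1 chain}, and conclude with Hurley's theorem for the inclusion and with Ayala's theorem for the second assertion (you state explicitly that Ayala needs $y\in\cR_{\cC_\cG}$, which comes from Theorem~\ref{thm: CR G}; the paper leaves this step implicit). The differences are cosmetic. You reach $x\ttog x$ directly from Lemmas~\ref{lem:shadow-recurrent-in-attractor}, \ref{lemma: x >= x} and \ref{lem:shadow-relation-restricts-to-attractor} rather than through Theorem~\ref{thm: R_S subset G}, and your opening sentence says the inclusion will be derived from the second assertion, which your actual (correct) use of Hurley's theorem contradicts, harmlessly.
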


\begin{proof}
Let $x\in \cR_\cS$. 
By Theorem~\ref{thm: R_S subset G}, $x$ is shadow chain-recurrent for $F_\cG$ as well and so there exist a $t>0$ such that
\[
x\ttog F^t(x)
\qquad\text{and}\qquad
F^t(x)\ttog x.
\]
Since $\ttog$ is transitive, $x\ttog x$. 
By the previous lemma, for every $\varepsilon>0$ there exists an
$(\varepsilon,1)$-Conley chain in $\cG$ from $x$ to itself. 
By Theorem~\ref{thm: Hur} applied to $F_\cG$, this implies that $x$ is chain-recurrent for $F_\cG$;
by Theorem~\ref{thm: CR G}, this means that $x\in \cR_\cC$.
Thus
\[
\cR_\cS\subset \cR_\cC.
\]

Now assume $x\in\cG$, $y\in \cR_\cC$ and $x\tto y$.
Then, by Theorem~\ref{thm: CR G}, $y\in\cG$ and, by Lemma~\ref{lem:shadow-relation-restricts-to-attractor}, $x\ttog y$.
Hence, by Lemma~\ref{lem: eps,1 chain}, for every $\varepsilon>0$ there exists an $(\varepsilon,1)$-Conley chain from $x$ to $y$.
Since $y\in \cR_\cC$, by Theorem~\ref{thm: Ayala} applied to $F_\cG$ it follows that $x\otog y$. Therefore, $x\oto y$.
\end{proof}

We are now finally ready to prove our main (and last) result.

\begin{theorem}
    \label{thm:main}
    Assume that $F$ has strong compact dynamics. 
    Then
    the following holds:
    \begin{enumerate}
        \item $\cR_{\cC}=\cR_{\cS}$, so we denote this set simply by $\cR$.
        \item If $x,y\in\cR$, then $x\oto y$ if and only if $x\tto y$.
        \item $M$ is a Conley node if and only if it is a shadow node.
        \item $\Gamma_\cC=\Gamma_\cS$.
    \end{enumerate}
\end{theorem}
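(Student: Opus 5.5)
Item~1 follows at once by combining Proposition~\ref{prop: R_C subset R_S} and Proposition~\ref{prop: RS subset RC}, so the plan is to concentrate on item~2, from which items~3 and~4 will be read off via Definition~\ref{def: nodes and graph}.

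For item~2, let $x,y\in\cR$. By Theorem~\ref{thm: CR G} and Lemma~\ref{lem:shadow-recurrent-in-attractor}, both $x$ and $y$ lie in $\cG$.

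\emph{Direction $x\tto y\Rightarrow x\oto y$.} We invoke the second assertion of Proposition~\ref{prop: RS subset RC}. Its hypotheses are exactly $x\in\cG$, $y\in\cR_\cC$ and $x\tto y$, so the conclusion $x\oto y$ follows directly.

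\emph{Direction $x\oto y\Rightarrow x\tto y$.} This is the step needing care. Lemma~\ref{lem:CtoP} only applies to the restricted relation $\otog$, while we are given $x\oto y$ for $F$ on all of $X$. Here we use Theorem~\ref{thm: CR G}: the Conley nodes of $F$ and $F_\cG$ coincide, and so do the graphs $\Gamma_\cC=\Gamma_{\cC_\cG}$.
\begin{itemize}
\item If $x$ and $y$ lie in the same Conley node, they are equivalent for $F_\cG$, so $x\otog y$.
\item Otherwise, the edge from the node $M\ni x$ to the node $N\ni y$ in $\Gamma_\cC$ is also an edge of $\Gamma_{\cC_\cG}$. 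By the well-posedness proposition for edges applied to $\cC_\cG$, this gives $x\otog y$.
\end{itemize}
Then Lemma~\ref{lem:CtoP} gives $x\ttog y$, and hence $x\tto y$. This identification of $\oto$ restricted to $\cR$ with $\otog$ is where I expect the only real subtlety. If Theorem~\ref{thm: CR G} does not directly give it, the fallback is to note that $\cR_\cC\subset\cG$ and argue, as in Lemma~\ref{lem:shadow-relation-restricts-to-attractor}, that Conley chains between points of $\cG$ can be pushed into $\cG$ using s-uniform continuity near $\cG$.

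For item~3, note that nodes are the maximal mutually equivalent subsets of the common set $\cR$. By item~2 the two equivalence relations agree on $\cR$, so the nodes coincide. For item~4, the vertex sets agree by item~3. An edge $M\to N$ exists exactly when $x\oto y$ for some $x\in M$, $y\in N$, and by item~2 this holds if and only if $x\tto y$. Hence $\Gamma_\cC=\Gamma_\cS$.
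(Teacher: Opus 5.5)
Your proof is correct and follows the paper's own argument essentially step by step. Item~1 comes from the two inclusions, $x\tto y\Rightarrow x\oto y$ from the second assertion of Proposition~\ref{prop: RS subset RC}, and $x\oto y\Rightarrow x\tto y$ by transferring the node/edge to $F_\cG$ via Theorem~\ref{thm: CR G} and then applying Lemma~\ref{lem:CtoP}, with items~3 and~4 read off from~1 and~2. Your split into same-node and different-node cases is harmless but unnecessary, since Definition~\ref{def: nodes and graph} allows an edge $M\oto N$ with $M=N$, and the fallback you mention is never needed.
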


\begin{proof}
\noindent
\textbf{(1) Equality of recurrent sets.}
This is an immediate consequence of Propositions~\ref{prop: R_C subset R_S} and~\ref{prop: RS subset RC}.

\medskip\noindent
\textbf{(2) Downstream equivalence for recurrent endpoints.}
Let $x,y\in \cR$. 

Assume first that $x\oto y$.
Let \(M,N\) be the Conley nodes containing \(x\) and \(y\). 
Then \(M\oto N\). 
By Theorem~\ref{thm: CR G}, the same edge occurs for $F_\cG$. 
Since the Conley nodes of \(F\) and \(F_\cG\) coincide, and points inside each node are mutually Conley chain-equivalent, it follows
that \(x\otog y\). 
Then, by Lemma~\ref{lem:CtoP}, we have that \(x\ttog y\), and so
\(x\tto y\).

Assume now that $x\tto y$. 
Then, since $y\in\cR=\cR_{\cS}$, Proposition~\ref{prop: RS subset RC} implies that $x\oto y$.

\medskip\noindent
\textbf{(3),(4).}
These are immediate consequences of items (1) and (2).
\end{proof}

\medskip\noindent
{\bf The compact dynamics assumption cannot be dropped.}
Indeed, the following slight modification of Example~\ref{ex: C neq S} shows that $\cR_\cC$ and $\cR_\cS$ can be different for semiflows that do not have compact dynamics.

\begin{example}
Let
\[
A=\{(s,0):s\in\mathbb R\},\qquad
B=\{(s,e^{-|s|}):s\in\mathbb R\},
\]
and set
\[
X=A\cup B\subset\mathbb R^2
\]
with the metric induced by the Euclidean distance. Write
\[
a_s=(s,0),\qquad b_s=(s,e^{-|s|}).
\]
Define a flow \(F\) on \(X\) by
\[
F^t(a_s)=a_{s+t},\qquad F^t(b_s)=b_{s-t},
\qquad s,t\in\mathbb R.
\]
Thus points on \(A\) move to the right and points on \(B\) move to the left.
The two components of \(X\) are asymptotic both as \(s\to+\infty\) and as
\(s\to-\infty\). The flow has no compact dynamics, since the positive orbit
\(\{a_t:t\ge0\}\) escapes every compact subset of \(X\).

We first show that \(a_0\in \cR_\cC\). It is enough to show that \(a_0\) is
Conley chain-equivalent to \(a_1=F^1(a_0)\). Let \(\varepsilon>0\) and
\(T>0\). Choose \(R>T+1\) so large that \(e^{-R}<\varepsilon\). Then
\[
a_0,\ b_R,\ a_{-R},\ a_1
\]
with transition times \(R,2R,R+1\) is a Conley \((\varepsilon,T)\)-chain
from \(a_0\) to \(a_1\), because
\[
d(F^R(a_0),b_R)=d(a_R,b_R)=e^{-R}<\varepsilon,
\]
\[
d(F^{2R}(b_R),a_{-R})=d(b_{-R},a_{-R})=e^{-R}<\varepsilon,
\]
and
\[
F^{R+1}(a_{-R})=a_1.
\]
Similarly,
\[
a_1,\ b_R,\ a_{-R},\ a_0
\]
with transition times \(R-1,2R,R\) is a Conley \((\varepsilon,T)\)-chain
from \(a_1\) to \(a_0\). Hence \(a_0\) and \(a_1\) are Conley
chain-equivalent, and so \(a_0\in \cR_\cC\).

We now prove that \(a_0\notin \cR_\cS\). 
It is enough to show that, for every \(t>0\), one does not have \(a_t\tto a_0\). Fix \(t>0\), and choose
\[
0<\varepsilon<\min\left\{\frac{t}{4},\frac14\right\}
\]
so small that
\[
-\log\varepsilon>\frac{7}{2}\varepsilon .
\]
We prove that there is no \(\varepsilon\)-chain from \(a_t\) to \(a_0\).

Suppose, by contradiction, that
\[
\gamma:[0,L]\to X
\]
is an \(\varepsilon\)-chain from \(a_t\) to \(a_0\). If \(\gamma\)
never jumps from one branch to the other, then \(\gamma([0,L])\subset A\).
Writing
\[
\gamma(r)=a_{\sigma(r)}
\]
and taking a partition
\[
0=r_0<r_1<\cdots<r_m=L
\]
with \(r_i-r_{i-1}\le1\) and \(m\le L+1\), the shadow estimate gives
\[
|\sigma(r_i)-\sigma(r_{i-1})-(r_i-r_{i-1})|<\varepsilon
\]
for each \(i=1,\ldots,m\). Summing,
\[
\sigma(L)> \sigma(0)+L-m\varepsilon
\ge t+L-(L+1)\varepsilon>0,
\]
because \(L\ge1\) and \(\varepsilon<1/4\). This contradicts
\(\gamma(L)=a_0\). Hence \(\gamma\) must jump between the two branches.

Since \(\gamma\) is piecewise continuous, it has only finitely many branch jumps.
Let \(\theta\) be the last jump time between \(A\) and \(B\). Since
\(\gamma(L)=a_0\in A\), this last jump must be from \(B\) to \(A\). Write
\[
\lim_{r\to\theta^-}\gamma(r)=b_\beta,\qquad
\lim_{r\to\theta^+}\gamma(r)=a_\alpha .
\]
Taking \(r\to\theta^-\) and \(s\to\theta^+\) in the shadow estimate gives
\[
d(b_\beta,a_\alpha)\le\varepsilon.
\]
Therefore
\[
|\alpha-\beta|\le\varepsilon
\]
and, since the vertical distance from \(b_\beta\) to \(A\) is \(e^{-|\beta|}\),
\[
e^{-|\beta|}\le\varepsilon.
\]
Thus
\beqn
\label{eq: -2}
|\beta|\ge-\log\varepsilon.
\eeqn

We now show that the last jump must occur very close to the final time. 
Let
\[
u\in(0,\min\{1,L-\theta\}).
\]
Since there are no branch jumps after \(\theta\), the point \(\gamma(\theta+u)\) lies on \(A\). 
Applying the shadow estimate from times just before \(\theta\) to \(\theta+u\), and then letting the initial time tend to \(\theta^-\), gives
\[
d\bigl(\gamma(\theta+u),b_{\beta-u}\bigr)\le\varepsilon.
\]
Applying the shadow estimate from times just after \(\theta\) to \(\theta+u\), and then letting the initial time tend to \(\theta^+\), gives
\[
d\bigl(\gamma(\theta+u),a_{\alpha+u}\bigr)\le\varepsilon.
\]
Hence
\[
|\alpha+u-(\beta-u)|\le2\varepsilon.
\]
Together with \(|\alpha-\beta|\le\varepsilon\), this implies
\[
2u\le3\varepsilon.
\]
Since this holds for every \(u\in(0,\min\{1,L-\theta\})\), we get
\beqn
\label{eq: -1}
L-\theta\le \frac32\varepsilon.
\eeqn

By~\eqref{eq: -1}, \(L-\theta<1\). 
Applying the shadow estimate from times just after \(\theta\) to the final time \(L\), and using \(\gamma(L)=a_0\), gives
\[
d\bigl(a_0,F^{L-\theta}(a_\alpha)\bigr)\le\varepsilon.
\]
Equivalently,
\[
|\alpha+(L-\theta)|\le\varepsilon.
\]
Thus, using~\eqref{eq: -1},
\[
|\alpha|\le L-\theta+\varepsilon\le \frac52\varepsilon.
\]
Since \(|\alpha-\beta|\le\varepsilon\), it follows that
\[
|\beta|\le |\alpha|+\varepsilon\le \frac72\varepsilon,
\]
contradicting~\eqref{eq: -2} and the choice of \(\varepsilon\). Therefore, there is no
\(\varepsilon\)-chain from \(a_t\) to \(a_0\).

Since \(t>0\) was arbitrary, \(a_0\) is not shadow chain-equivalent to any positive-time image \(F^t(a_0)\). 

Hence
\(
a_0\notin \cR_\cS
\)
and so 
$$
\cR_\cC\neq\cR_\cS.
$$
\end{example}

\medskip\noindent
{\bf Final remark.}
In~\cite{Hur91}, Hurley introduced a new type of $(\eps,T)$-chains in case of non-compact spaces where $\eps$, rather than being a positive constant, is a positive continuous function.
We refer to those chains as {\em Hurley chains}.
Hurley chains coincide with Conley chains in the compact setting and it is shown in~\cite{Hur91} that several important results that hold for Conley chains on compact spaces fail to hold on non-compact spaces but do hold in both settings for Hurley chains.

We did not consider streams of Hurley chains in our works because our primary interest are semiflows with strong compact dynamics but we conjecture here that a ``Hurley version'' of our shadow chains would give rise to the same chain stream as the corresponding Hurley chain stream.

\section*{Acknowledgments}
This material is based upon work supported by the National Science Foundation under Grant DMS-2308225.

\bibliographystyle{unsrt}
\bibliography{refs.bib} 

\end{document}